\begin{document}

\newcommand{\nc}{\newcommand}
\newcommand{\bom}{{_{\mathbf{\omega}}}}
\newcommand{\st}{\divideontimes}
\def\neweq{\setcounter{theorem}{0}}
\newtheorem{theorem}[]{Theorem}
\newtheorem{proposition}[]{Proposition}
\newtheorem{corollary}[]{Corollary}
\newtheorem{lemma}[]{Lemma}
\newtheorem{example}[]{Example}
\theoremstyle{definition}
\newtheorem{definition}[]{Definition}
\newtheorem{remark}[]{Remark}
\newtheorem{conjecture}[equation]{Conjecture}
\newcommand{\dis}{{\displaystyle}}
\def\question{\noindent\textbf{Question.} }
\def\remark{\noindent\textbf{Remark.} }
\def\proof{\medskip\noindent {\textsl{Proof.} \ }}
\def\endproof{\hfill$\square$\medskip}
\def\str{\rule[-.2cm]{0cm}{0.7cm}}
\newcommand{\beq}{\begin{equation}\label}
\newcommand{\aand}{\quad{\text{\textsl{and}}\quad}}
\newcommand{\la}{\label}
\numberwithin{equation}{section}
\title{Double Hall algebras and derived equivalences }
\author{Tim Cramer}
\thanks{\emph{E-mail address}: tim.cramer@yale.edu}
\maketitle
We show that the reduced Drinfeld double of the Ringel-Hall algebra
of a hereditary category is invariant under derived equivalences. By
associating an explicit isomorphism to a given derived equivalence,
we also extend the results of Burban-Schiffmann (\cite{BS1,BS2}),
Sevenhant-Van den Bergh (\cite{SVdB}), and Xiao-Yang (\cite{XY}).

\section{Introduction}

Let $\mathcal{A}$ be an essentially small Abelian category such that
the sets Hom$(A,B)$ and Ext$^1(A,B)$ are each finite for all $A,B\in
\mathcal{A},$ and let $\mathcal{I}$ denote the set of isomorphism
classes of objects in $\mathcal{A}.$ Then it is possible to give the
vector space $\mathbb{C}[\mathcal{I}]$ the structure of an
associative algebra by the rule
\[[A]*[B]=\sum_{[C]\in \mathcal{I}} g_{A,B}^C [C],\]
where $g_{A,B}^C$ is defined to be the number of subobjects
$M\subset C$ such that $M \cong B$ and $C/M \cong A$. Equivalently,
\[g_{A,B}^C = \frac{|\Delta_{B, A}^C|}{|\mbox{Aut} A| |\mbox{Aut} B|},\]
where $\Delta_{B, A}^C$ denotes the set of short exact sequences
\[0 \rightarrow B \rightarrow C \rightarrow A \rightarrow 0.\]
The resulting algebra is known as the Hall algebra $H_{\mathcal{A}}$ of $\mathcal{A}.$ Our conditions on $\mathcal{A}$ guarantee that the structure constants $g_{A,B}^C$ are finite. Associativity can easily be shown, and indeed the structure constants $g_{A_1,A_2, \cdots, A_n}^B$ in the product
\[[A_1]*[A_2]*\cdots *[A_n]=\sum_{[B]\in \mathcal{I}} g_{A_1,A_2,\cdots A_n}^B [B]\]
count the number of filtrations
\[L_n\subset \cdots \subset L_2\subset L_1=B\]
such that $L_1/L_2\cong A_1,$ $L_2/L_3\cong A_2, \cdots$ and $L_n\cong A_n.$

Hall algebras first appeared in the work of Steinitz (\cite{S}) and
Hall (\cite{H}) in the case where $\mathcal{A}$ is the category of
Abelian $p$-groups. They reemerged in the work of Ringel
(\cite{R1}-\cite{R3}), who showed in \cite{R1} that when $\mathcal{A}$
is the category of quiver representations of an A-D-E quiver
$\vec{Q}$ over a finite field $\mathbb{F}_q$, the Hall algebra of
$\mathcal{A}$ provides a realization of the nilpotent subalgebra
$U_q(\mathfrak{n}_+)$ of the quantum group $U_q(\mathfrak{g})$
associated to the underlying graph of $\vec{Q}.$ More generally, if
$\vec{Q}$ is of affine type, then the subalgebra of the Hall algebra
generated by the simple objects corresponding to the vertices (known
as the ``composition subalgebra") is isomorphic to the nilpotent
subalgebra $U_q(\mathfrak{n}_+)$ of the quantum Kac-Moody algebra
$U_q(\mathfrak{g})$ associated to $\vec{Q}$.

In \cite{R1}, Ringel posed the question of how to extend this
construction naturally to recover the whole quantum group
$U_q(\mathfrak{g}).$ Using the group algebra of the Grothendieck group 
$K_0(\mathcal{A})$ to realize the torus algebra, he showed how to
extend the Hall algebra in such a way that it recovers the Borel
subalgebra $U_q(\mathfrak{b}_+)$ when
$\mathcal{A}=Rep_{\mathbb{F}_q}(\vec{Q})$. By generalizing the
coproduct of Green (\cite{Gr}) to this ``extended" Hall algebra, Xiao
(\cite{X2}) showed that it is a self-dual Hopf algebra when
$\mathcal{A}$ is the category of representations of a quiver. Using
the Drinfeld double construction, Xiao defined an algebra
$DH_{\mathcal{A}}$ that realizes the whole quantum group in the special
case that $\vec{Q}$ is an A-D-E quiver. In fact, this construction generalizes to any Abelian category $\mathcal{A}$ that is hereditary (i.e. of homological dimension less than or equal to one) and satisfies certain finiteness conditions.

It has remained an open question, however, whether this is the most natural way to
realize $U_q(\mathfrak{g}).$ Since the Hall algebras of such derived
equivalent categories as $Rep_{\mathbb{F}_q}(A_1^{(1)})$ and
$Coh(\mathbb{P}^1)$ have been found to correspond to positive
``halves" of the same quantum group (\cite{K1}), it has been thought
that the correct extension of $H_{\mathcal{A}}$ should be given in
terms of the derived category, or should at least be invariant under
derived equivalences. Indeed, many attempts have been made to define
associative algebras analogous to the Hall algebra for
$D^b(\mathcal{A})$ using exact triangles (e.g. \cite{X1}, \cite{T},
\cite{XXu}), but none of these constructions have realized the
quantum group $U_q(\mathfrak{g})$ for
$\mathcal{A}=Rep_{\mathbb{F}_q}(\vec{Q}).$ On the other hand, in the
case of the derived Bernstein-Gelfand-Ponomarev reflection functors
(\cite{GM})
\[R_{\alpha}:D^b(Rep_{\mathbb{F}_q}(\vec{Q}))\rightarrow D^b(Rep_{\mathbb{F}_q}(\sigma_{\alpha}\vec{Q})),\]
explicit isomorphisms
$(R_{\alpha})_*:DH_{\vec{Q}}\rightarrow
DH_{\sigma_{\alpha}\vec{Q}}$ have been given in
\cite{SVdB} and \cite{XY}. Similarly, Burban and Schiffmann have
associated algebra automorphisms of $DH_{\mathcal{A}}$ to
the auto-equivalences of the derived categories of coherent sheaves
on elliptic curves (\cite{BS1}) and weighted projective lines of
tubular type (\cite{BS2}). In \cite{Sc}, Schiffmann states a
conjecture generalizing these formulas to any tilting functor. 

In this paper we consider hereditary categories subject to finiteness conditions which are satisfied by all of the above examples. 

\begin{definition}
We say that an essentially small category $\mathcal{A}$ is \textit{finitary} if $\mbox{Hom}(A,B)$ and $\mbox{Ext}^1(A,B)$ are finite sets for all $A,B\in \mathcal{A},$ and if for some number $k$ there is a homomorphism $d:K_0(\mathcal{A})\rightarrow \mathbb{Z}^k$ such that

i) $d(K^+_0(\mathcal{A}))\subset \mathbb{N}^k\cup (\mathbb{N}^{k-1} \backslash \{0\})\times \mathbb{Z}$ 

ii) If $d([A])=0,$ then $A=0.$

Here $K^+_0(\mathcal{A})$ denotes the subset of $K_0(\mathcal{A})$ corresponding to classes of objects in $\mathcal{A}.$
\end{definition}
This condition is satisfied by the category of representations of a quiver over $\mathbb{F}_q$ (or, equivalently, the category of finite-dimensional modules of a hereditary algebra over $\mathbb{F}_q$) and by the category of coherent sheaves on a smooth projective curve or a weighted projective line over $\mathbb{F}_q$ (see Sections 7-9).

The main result of this paper can be stated as follows

\begin{theorem}

Let $\mathcal{A}$ and $\mathcal{B}$ be two finitary hereditary Abelian categories. If there exists a derived equivalence
$F:\mathcal{D}^b\mathcal{A}\rightarrow \mathcal{D}^b\mathcal{B},$
then $DH_{\mathcal{A}}$ and $DH_{\mathcal{B}}$
are isomorphic as algebras.

\end{theorem}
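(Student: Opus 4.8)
\medskip\noindent\textbf{Outline of the proof.}
The strategy is to realize $DH_{\mathcal{A}}$ as an algebra $DH(\mathcal{D}^b\mathcal{A})$ that is manifestly an invariant of the triangulated category $\mathcal{D}^b\mathcal{A}$ alone. For a triangulated category $\mathcal{T}$ with finite $\mathrm{Hom}$-groups and a suitable Grothendieck group, let $DH(\mathcal{T})$ be the vector space with basis the isomorphism classes of objects of the root category $\mathcal{T}/[2]$, tensored with $\mathbb{C}[K_0(\mathcal{T})]$, equipped with a Ringel--To\"en-style product: the product of two classes is a weighted sum over exact triangles $L\to M\to N\to L[1]$, the weight being a prescribed power of $\sqrt q$ (read off from the Euler form and the orders of the relevant automorphism groups) and the torus elements $K_\gamma$ inserted according to the classes in $K_0(\mathcal{T})$. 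The point is that $DH(\mathcal{T})$ refers only to objects, $\mathrm{Hom}$-groups, exact triangles and the Euler form; so any triangulated equivalence $F:\mathcal{D}^b\mathcal{A}\to\mathcal{D}^b\mathcal{B}$ — which induces a bijection on isomorphism classes in every shift degree, preserves all $\mathrm{Hom}$- and $\mathrm{Ext}$-groups and all triangles, and restricts to an isometry $F_*:K_0(\mathcal{A})\to K_0(\mathcal{B})$ of Euler forms — yields, tautologically, an isomorphism $F_*:DH(\mathcal{D}^b\mathcal{A})\to DH(\mathcal{D}^b\mathcal{B})$. It then remains to prove, for a finitary hereditary $\mathcal{A}$, a canonical isomorphism $DH_{\mathcal{A}}\cong DH(\mathcal{D}^b\mathcal{A})$; composing $DH_{\mathcal{A}}\xrightarrow{\sim}DH(\mathcal{D}^b\mathcal{A})\xrightarrow{F_*}DH(\mathcal{D}^b\mathcal{B})\xrightarrow{\sim}DH_{\mathcal{B}}$ then proves the theorem.

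Constructing this comparison isomorphism is the technical heart, and it is here that hereditariness enters. Since $\mathcal{A}$ is hereditary, every object of $\mathcal{D}^b\mathcal{A}$ is non-canonically $\bigoplus_i H^i(X)[-i]$ with $H^i(X)\in\mathcal{A}$, and $\mathrm{Hom}_{\mathcal{D}^b\mathcal{A}}(A,B[i])=0$ for $A,B\in\mathcal{A}$ unless $i\in\{0,1\}$; consequently, after rotation, every exact triangle with terms supported in shift degrees $\{0,1\}$ decomposes into short exact sequences of $\mathcal{A}$ together with split summands. Using this, I would identify $[A]^+\in DH_{\mathcal{A}}$ with (a torus- and $\sqrt q$-normalization of) the class of $A\in\mathcal{D}^b\mathcal{A}/[2]$, $[B]^-$ with that of $B[1]$, and $K_\alpha$ with the torus element attached to $\alpha\in K_0(\mathcal{A})$, and check that: within each parity the triangle product restricts to the twisted Ringel--Hall product of $\mathbf{H}^+$, resp.\ $\mathbf{H}^-$; the torus acts as in the extended Hall algebra; and the mixed relation — moving a class in odd degree past one in even degree — is exactly Green's cross-commutation relation, read off from the triangles with a single term shifted, after performing the torus reduction identifying $K_\alpha^+$ with $K_{-\alpha}^-$. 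Finally one shows that $DH(\mathcal{D}^b\mathcal{A})$ is genuinely an associative algebra: by the octahedral axiom together with the vanishing of $\mathrm{Hom}(X,Y[i])$ outside $i\in\{0,1\}$, every triple product reduces to a configuration of short exact sequences in $\mathcal{A}$, where the associativity of the Ringel--Hall algebra and Green's identity supply what is needed.

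Unravelling the chain $DH_{\mathcal{A}}\xrightarrow{\sim}DH(\mathcal{D}^b\mathcal{A})\xrightarrow{F_*}DH(\mathcal{D}^b\mathcal{B})\xrightarrow{\sim}DH_{\mathcal{B}}$ on generators produces the promised explicit isomorphism: $[A]^+$ is sent to the element of $DH_{\mathcal{B}}$ obtained by taking the cohomology objects $H^i(F(A))$ of $F(A)$ with respect to the heart $\mathcal{B}$, forming the ordered product of the corresponding $[\,\cdot\,]^+$'s (for even $i$) and $[\,\cdot\,]^-$'s (for odd $i$), and inserting the $\sqrt q$-powers and the torus element prescribed by $F_*$ and by $\sum_i (-1)^i[H^i(F(A))]\in K_0(\mathcal{B})$; similarly for $[A]^-$, and $K_\alpha\mapsto K_{F_*\alpha}$. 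Specialising to $\mathcal{A}=\mathcal{B}=Rep_{\mathbb{F}_q}(\vec{Q})$ with $F=R_\alpha$ recovers the formulas of Sevenhant--Van den Bergh and Xiao--Yang, and the coherent-sheaf cases recover Burban--Schiffmann; verifying each specialisation is a finite check.

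The step I expect to be the main obstacle is the comparison $DH_{\mathcal{A}}\cong DH(\mathcal{D}^b\mathcal{A})$, for two intertwined reasons. First, the torus bookkeeping and the powers of $\sqrt q$ must be pinned down exactly: a slightly wrong normalization yields only a twisted form of the reduced double rather than the double itself, and the matching of the mixed relation with Green's formula is precisely what constrains the correct choice. Second — and this is the genuinely delicate point — the naive weighted count of exact triangles is \emph{not} associative over an arbitrary triangulated category (this is the obstruction that forces To\"en's derived Hall algebra to impose strong finiteness hypotheses), so the proof that $DH(\mathcal{D}^b\mathcal{A})$ is associative, and hence well-defined, relies essentially on $\mathcal{A}$ being hereditary, which collapses every octahedral diagram into short exact sequences governed by the classical Ringel--Hall and Green identities.
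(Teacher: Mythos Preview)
Your strategy is genuinely different from the paper's, and the difference matters. The paper does \emph{not} construct any intermediate triangulated-category invariant. Instead, given $F:\mathcal{D}^b\mathcal{A}\to\mathcal{D}^b\mathcal{B}$, it uses the decomposition $\mathcal{A}=\bigoplus_i\mathcal{A}_i$ with $\mathcal{A}_i=\{A:F(A)\in\mathcal{B}[i]\}$ (available because both categories are hereditary) to write down $F_*$ explicitly on the generators $[M]\otimes 1$, $1\otimes[M]$, $k_M\otimes 1$ for $M\in\mathcal{A}_n$, and then checks by hand that $F_*$ respects a finite presentation of $DH_{\mathcal{A}}$ (relations (5.2)--(5.11)). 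The verification is a three-case analysis ($i=j$, $|i-j|=1$, $|i-j|\ge 2$), the middle case being the substantive one where the Drinfeld-double cross relation on the $\mathcal{B}$-side is matched against the Hall product on the $\mathcal{A}$-side via the exact-triangle count $g_{B[1],A}^{N[1]\oplus L}$. No new algebra is introduced; everything happens inside $DH_{\mathcal{A}}$ and $DH_{\mathcal{B}}$.

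Your route is more conceptual, but it has a real gap at exactly the point you flag. You have not actually specified the product on $DH(\mathcal{T})$: ``Ringel--To\"en-style product'' with ``prescribed powers of $\sqrt q$'' is not a definition, and the To\"en weight $\prod_{i>0}|\mathrm{Hom}(X,Y[-i])|^{(-1)^i}$ is meaningless in the root category $\mathcal{T}/[2]$, where shifts are $2$-periodic. More seriously, the paper's introduction records precisely that this program---building an associative Hall-type algebra from $\mathcal{D}^b\mathcal{A}$ (or its root category) whose output is $U_q(\mathfrak g)$---had been attempted (Xiao's root-category Hall algebra, To\"en's derived Hall algebra, Xiao--Xu) and had \emph{not} succeeded in recovering the reduced Drinfeld double. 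So the lemma you need, $DH_{\mathcal{A}}\cong DH(\mathcal{D}^b\mathcal{A})$, is not a technical bookkeeping exercise but an open problem that the paper deliberately circumvents. (Later work---Bridgeland's Hall algebra of $2$-periodic complexes, Gorsky's semi-derived Hall algebras---does realize something like your $DH(\mathcal{T})$, but those constructions are substantial papers in their own right and postdate this one.) Your argument for associativity is also circular as stated: if the proof that $DH(\mathcal{T})$ is associative uses the $t$-structure with heart $\mathcal{A}$, then $DH(\mathcal{T})$ is not ``manifestly a triangulated invariant,'' and the passage $F_*:DH(\mathcal{D}^b\mathcal{A})\to DH(\mathcal{D}^b\mathcal{B})$ is no longer tautological---you are back to checking that the two hearts give the same algebra, which is the original problem.

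In short: the paper trades conceptual elegance for a finite, explicit computation that actually closes. Your approach would explain \emph{why} the theorem holds, but as written it presupposes a construction that was (and to some extent still is) the hard open question, rather than supplying it.
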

\emph{Acknowledgements.} I would like to thank Professor Mikhail
Kapranov for his support and many useful discussions. I am also
grateful to Professor Igor Burban for pointing out mistakes in an
earlier version of the paper.

\section{The Drinfeld double}

Given two Hopf algebras $\Gamma$ and $\Lambda,$ a Hopf pairing is a bilinear
map $\varphi: \Gamma \times \Lambda \rightarrow \mathbb{C}$ satisfying for all
$a,a'\in \Gamma$ and $b,b' \in \Lambda$

\begin{equation}
\la{h1}
\varphi(1,b)= \epsilon_{\Lambda}(b), \varphi(a,1)=\epsilon_{\Gamma}(a),
\end{equation}

\begin{equation}
\la{h2} \varphi(a,bb')=\varphi(\triangle_{\Gamma}(a), b\otimes b'),
\end{equation}

\begin{equation}
\la{h3} \varphi(aa',b)=\varphi(a\otimes a', \triangle_{\Lambda}(b)),
\end{equation}

\begin{equation}
\la{h4} \varphi(\sigma_{\Gamma}(a),b)=\varphi(a,\sigma_{\Lambda}(b)),
\end{equation}
where $\epsilon, \triangle,$ and $\sigma$ denote the counit,
coproduct, and antipode, respectively. Here we define
$\varphi:(\Gamma \otimes \Gamma)\times (\Lambda\otimes \Lambda)\rightarrow \mathbb{C}$ by
the rule $\varphi(a\otimes a', b\otimes
b')=\varphi(a,b)\varphi(a',b').$

A skew-Hopf pairing is a bilinear map $\varphi: \Gamma\times \Lambda\rightarrow
\mathbb{C}$ satisfying \eqref{h1}-\eqref{h2} and

\begin{equation}
\la{h33} \varphi(aa',b)=\varphi(a\otimes a', \triangle_{\Lambda}^{opp}(b)),
\end{equation}

\begin{equation}
\la{h44} \varphi(\sigma_{\Gamma}(a),b)=\varphi(a,\sigma_{\Lambda}^{-1}(b)).
\end{equation}

When there exists a skew-Hopf pairing $\varphi:\Gamma\times \Lambda\rightarrow
\mathbb{C},$ the Drinfeld double (\cite{D2}, \cite{J}) of $\Gamma$ and $\Lambda$
with respect to $\varphi$ is the vector space $\Gamma\otimes \Lambda$ with
multiplication defined by

\begin{equation}
\la{d1}
(a\otimes 1)(a'\otimes 1) = aa'\otimes 1,
\end{equation}

\begin{equation}
\la{d2}
(1\otimes b)(1\otimes b')=1\otimes bb',
\end{equation}

\begin{equation}
\la{d3}
(a\otimes 1)(1\otimes b)=a\otimes b,
\end{equation}

\begin{equation}
\la{d4} (1\otimes b)(a\otimes 1)=\sum \varphi(a_{(1)},
\sigma_{\Lambda}(b_{(1)})) a_{(2)}\otimes b_{(2)} \varphi(a_{(3)}, b_{(3)}),
\end{equation}
for all $a,a'\in \Gamma, b, b'\in \Lambda.$ The last identity \eqref{d4} is
equivalent to

\begin{equation}
\la{d5} \sum \varphi(a_{(2)}, b_{(2)}) a_{(1)}\otimes b_{(1)} = \sum
\varphi(a_{(1)}, b_{(1)}) (1\otimes b_{(2)})(a_{(2)}\otimes 1)
\end{equation}
for all $a\in \Gamma, b\in \Lambda.$

The following lemma is adapted from Lemma 3.2 in \cite{BS1}:
\begin{lemma}
For any $a\in \Gamma$ and $b\in \Lambda,$ let $D(a,b)$ denote the equation
\eqref{d5}. Then for $a,a'\in \Gamma$ and $b\in \Lambda,$ the equation
$D(aa',b)$ is implied by the collection of equations $D(a, b_{(1)})$
and $D(a', b_{(2)}).$ Similarly, for $a\in \Gamma$ and $b,b'\in \Lambda,
D(a,bb')$ is implied by the collection of equations $D(a_{(1)}, b)$
and $D(a_{(2)}, b').$

\qed

\end{lemma}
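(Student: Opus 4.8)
The plan is to verify both halves of the claim by a direct but careful manipulation of the coproduct, using the Hopf algebra axioms (coassociativity, the fact that $\triangle$ is an algebra map, and the antipode identities) together with Sweedler notation. Since $D(a,b)$ asserts the equality
\[
\sum \varphi(a_{(2)}, b_{(2)})\, a_{(1)}\otimes b_{(1)} \;=\; \sum \varphi(a_{(1)}, b_{(1)})\, (1\otimes b_{(2)})(a_{(2)}\otimes 1),
\]
I would first rewrite the right-hand side of $D(aa',b)$ using that $\triangle(aa') = \triangle(a)\triangle(b)$ — more precisely $\triangle(aa') = \sum a_{(1)}a'_{(1)}\otimes a_{(2)}a'_{(2)}$ — so that $(aa')_{(1)} = a_{(1)}a'_{(1)}$ and $(aa')_{(2)} = a_{(2)}a'_{(2)}$, and expand the pairing $\varphi(a_{(1)}a'_{(1)}, b_{(1)})$ via \eqref{h3} (or \eqref{h33} in the skew case) as a pairing against $\triangle_\Lambda(b_{(1)})$. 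The strategy is then to insert $D(a',\,\cdot\,)$ applied to the appropriate leg of $b$ to move $a'_{(2)}\otimes 1$ past the factor $(1\otimes b'')$ it meets, and afterwards insert $D(a,\,\cdot\,)$ to move $a_{(2)}\otimes 1$ past the remaining factor coming from $b$. Coassociativity of $\triangle_\Lambda$ is what guarantees that the legs of $b$ produced by these two successive applications assemble correctly into $b_{(1)}$ and $b_{(2)}$ as demanded by the statement.

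For the second assertion I would argue symmetrically: expand $D(a, bb')$ using that $\triangle_\Lambda$ is an algebra map so $(bb')_{(i)} = b_{(i)}b'_{(i)}$, split $\varphi(a_{(2)}, b_{(2)}b'_{(2)})$ via \eqref{h2}, and then apply $D(a_{(2)}, b')$ followed by $D(a_{(1)}, b)$ — here coassociativity of $\triangle_\Gamma$ plays the role that coassociativity of $\triangle_\Lambda$ played before. In both cases the multiplication rules \eqref{d1}--\eqref{d3} of the double are used freely to regroup tensor factors of the form $(a''\otimes 1)$ and $(1\otimes b'')$, and no further input beyond the bialgebra structure and the pairing identities is needed; the antipode only enters through whatever form of $D$ we have already assumed.

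The main obstacle I anticipate is purely bookkeeping: keeping track of which Sweedler leg of $b$ (resp. $a$) is consumed at each step, and confirming that after reindexing via coassociativity the surviving pairings are exactly $\varphi(a_{(1)}, b_{(1)})$ and the surviving tensor factor is exactly $a_{(2)}\otimes b_{(2)}$ with the factors in the order prescribed by $D(aa',b)$. A clean way to control this is to first establish the identity at the level of ``one application of $D$ moves one generator past one $(1\otimes b'')$,'' i.e. to record $D(a,b)$ in the equivalent operator form \eqref{d4}, and then compose two such moves. One should also check that the argument is insensitive to whether one works with the Hopf pairing axioms \eqref{h3}--\eqref{h4} or the skew-Hopf versions \eqref{h33}--\eqref{h44}, since the lemma will be applied in the skew setting; the only change is replacing $\triangle_\Lambda$ by $\triangle_\Lambda^{opp}$ in the relevant expansion, which permutes the two legs of $b$ but does not affect the logical structure of the implication. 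Given this, the proof reduces to two essentially identical computations, and I would present only the first in detail, remarking that the second follows by the same method with the roles of $\Gamma$ and $\Lambda$ interchanged.
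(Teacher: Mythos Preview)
The paper does not actually prove this lemma: it records the statement as ``adapted from Lemma 3.2 in \cite{BS1}'' and immediately writes \qed. Your outlined argument---expand $(aa')_{(i)}=a_{(i)}a'_{(i)}$ via the algebra-map property of $\triangle_\Gamma$, split the pairing using \eqref{h33}, and then apply the two assumed instances of $D$ in succession, with coassociativity of $\triangle_\Lambda$ reassembling the Sweedler legs---is exactly the standard computation behind the cited result, and it is correct. One minor remark: the order in which you propose to apply $D(a',\cdot)$ and $D(a,\cdot)$ is the natural one if you start from the \emph{left}-hand side of $D(aa',b)$ (where $a'_{(1)}\otimes 1$ sits adjacent to $1\otimes b_{(1)}$), whereas starting from the right-hand side as you say forces you to use $D(a,\cdot)$ first; either direction works, but you should make the choice consistent when you write it out.
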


We now mention the original motivation for the Drinfeld double
construction. Let $\mathfrak{g}$ be a Kac-Moody algebra and consider
$U_q(\mathfrak{b}_+),$ the quantized enveloping algebra of a Borel
subalgebra $\mathfrak{b_+}\subset \mathfrak{g}.$ Note that this is a
Hopf algebra. We would like to recover $U_q(\mathfrak{g})$ from
$U_q(\mathfrak{b}_+).$ Let $\Gamma=U_q(\mathfrak{b}_+)$ and let
$\Lambda=\Gamma^{coop},$ i.e. $\Gamma$ with opposite coproduct. Then there exists a
symmetric skew-Hopf pairing $\varphi:\Gamma\times \Lambda\rightarrow
\mathbb{C}$ defined by
\[\varphi(E_i, E_j)=\frac{\delta_{i,j}}{q-1}, \mbox{   }
\varphi(K_i,K_j)=q^{(i,j)/2}, \mbox{   } \varphi(E_i,K_j)=0.\] The
quotient of the Drinfeld double of $\Gamma$ and $\Lambda$ with respect to this
pairing by the ideal generated by the elements $K_i\otimes
1-1\otimes K_i^{-1}$ is isomorphic to $U_q(\mathfrak{g}).$

\section{Hall algebras}
Assume $\mathcal{A}$ is an essentially small Abelian category of
finite global dimension with Ext$^i(M,N)$ finite for all $i.$ Then
we can define a new multiplication on $\mathbb{C}[\mathcal{I}]$ that
is still associative by
\[[A]*[B]=\frac{1}{\langle B,A \rangle}\sum_{[C]\in \mathcal{I}} g_{AB}^C [C],\]
where the Euler form $\langle \cdot, \cdot
\rangle:K_0(\mathcal{A})\times K_0(\mathcal{A})\rightarrow
\mathbb{C}$ is defined by
\[\langle M,N \rangle = \sqrt{\Pi_{i\ge 0}^{\infty}
|\mbox{Ext}^{i}(M,N)|^{(-1)^i}}.\] We will call this the Ringel-Hall
algebra of $\mathcal{A}$ and denote it by $H_{\mathcal{A}}.$ In the
case of $\mathcal{A}=Rep_{\mathbb{F}_q}(\vec{Q}),$ the Ringel-Hall
algebra of $\mathcal{A}$ (or its ``composition subalgebra" if
$\vec{Q}$ is not finite type) also gives a realization of
$U_q(\mathfrak{n}_+).$

Next, we define the extended Ringel-Hall algebra
$\tilde{H}_{\mathcal{A}}$ to be the associative algebra generated by
$H_{\mathcal{A}}$ and $\mathbb{C}[K_0(\mathcal{A})]$ subject to the
relation
\[k_{\alpha} [M] = ( \alpha | M ) [M] k_{\alpha},\] where $(A|B):=\langle A,B \rangle \langle B,A
\rangle.$ Here $k_{\alpha}$ represents a basis element of
$\mathbb{C}[K_0(\mathcal{A})]$ corresponding to $\alpha\in
K_0(\mathcal{A}).$ There is an isomorphism of vector spaces given by
the multiplication
\[m:\mathbb{C}[K_0(\mathcal{A})]\otimes H_{\mathcal{A}}\rightarrow
\tilde{H}_{\mathcal{A}}.\] Note that $[0]\in \mathcal{I}$ is the
unit in both $H_{\mathcal{A}}$ and $\tilde{H}_{\mathcal{A}}.$

If we assume that $\mathcal{A}$ is hereditary and that each object
of $\mathcal{A}$ has finitely many subobjects, then
$\tilde{H}_{\mathcal{A}}$ is a self-dual Hopf algebra, as shown by
Xiao \cite{X2}. In the case that $\vec{Q}$ is an A-D-E quiver, the extended Hall algebra $\tilde{H}_{\mathcal{A}}$ of $\mathcal{A}=Rep_{\mathbb{F}_q}(\vec{Q})$ gives a realization of $U_q(\mathfrak{b}_+)$ as a Hopf algebra. In general, the coproduct, counit, and antipode for $\tilde{H}_{\mathcal{A}}$ are
given by (\cite{Gr}, \cite{X2}):

\begin{equation} \la{cop1}
\triangle([A])=\sum_{[A'],[A'']\in \mathcal{I}} \langle A',A''
\rangle \frac{|\Delta_{A'',A'}^A|}{|\mbox{Aut} A|}
k_{A''}[A']\otimes[A''],
\end{equation}
\begin{equation} \la{cop2}
\triangle(k_{\alpha})=k_{\alpha}\otimes k_{\alpha},
\end{equation}
\begin{equation} \la{cou}
\epsilon([A]k_{\alpha})=\delta_{[A],[0]},
\end{equation}
\begin{equation} \la{antip}
\sigma([A])=\frac{k_A^{-1}}{|\mbox{Aut}A|} \left(\sum_{n=1} (-1)^n \sum_{L_{\bullet}\in F_{A,n}} \prod_{i=1}^n \langle L_i/L_{i+1}, L_{i+1} \rangle (L_i/L_{i+1} | L_{i+1}) |\mbox{Aut} L_i/L_{i+1}| [L_i/L_{i+1}] \right),
\end{equation}
\begin{equation} \la{antip2}
\sigma(k_{\alpha})=k_{\alpha}^{-1},
\end{equation}
for $[A]\in \mathcal{I}$ and $\alpha\in K_0(\mathcal{A}).$ Here the indexing set $F_{A,n}$ denotes the set of all strict $n$-step filtrations
\[0=L_{n+1}\subset L_n \subset \cdots \subset L_2 \subset L_1=A.\] Note that $\tilde{H}_{\mathcal{A}}$ is graded as a Hopf algebra by the Grothendieck group $K_0(\mathcal{A}).$

We will use a pairing between $\tilde{H}_{\mathcal{A}}$ and $\tilde{H}^{coop}_{\mathcal{A}}$ which is a renormalization of Green's scalar product on $\tilde{H}_{\mathcal{A}}.$

\begin{proposition}
The bilinear map $\varphi:\tilde{H}_{\mathcal{A}}\times
\tilde{H}^{coop}_{\mathcal{A}}\rightarrow \mathbb{C}$ defined by

\[\varphi(k_{\alpha}[M], k_{\beta} [M']) = (\alpha| M)(M| M)(M|
\beta)(\alpha| \beta) \frac{\delta_{[M],[M']}}{|\mbox{Aut} M|}\]

is a skew-Hopf pairing.
\end{proposition}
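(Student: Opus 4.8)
The statement is an extended, renormalized form of Green's theorem that his scalar product is a bialgebra pairing on the Hall algebra, so the plan is to reduce it to that theorem (which is the content of \cite{Gr} and \cite{X2}) together with bookkeeping of the Euler-form prefactors. First I would unwind what the four axioms say for this particular $\Gamma=\tilde H_{\mathcal A}$ and $\Lambda=\tilde H^{coop}_{\mathcal A}$. Since $\Lambda$ differs from $\tilde H_{\mathcal A}$ only in the coproduct, multiplication on $\Lambda$ is the ordinary Ringel--Hall product, $\triangle^{opp}_\Lambda$ is the ordinary coproduct \eqref{cop1}--\eqref{cop2}, and $\sigma_\Lambda=\sigma^{-1}$ so $\sigma_\Lambda^{-1}=\sigma$. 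Hence \eqref{h1}--\eqref{h2} and \eqref{h33}--\eqref{h44} become, respectively, $\varphi(1,b)=\epsilon(b)$, $\varphi(a,1)=\epsilon(a)$, $\varphi(a,b*b')=\sum\varphi(a_{(1)},b)\varphi(a_{(2)},b')$, $\varphi(a*a',b)=\sum\varphi(a,b_{(1)})\varphi(a',b_{(2)})$, and $\varphi(\sigma a,b)=\varphi(a,\sigma b)$, all now with the ordinary product and coproduct of $\tilde H_{\mathcal A}$.

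The counit axioms are immediate: putting $M=0$ or $M'=0$ in the formula for $\varphi$ and using $(0\,|\,N)=(N\,|\,0)=1$ and $|\mathrm{Aut}\,0|=1$ gives $\varphi(k_0[0],k_\beta[M'])=\delta_{[0],[M']}$ and $\varphi(k_\alpha[M],k_0[0])=\delta_{[M],[0]}$, matching \eqref{cou}. For the two multiplicativity axioms I would reduce, by the standard induction on word length using coassociativity and the counit axiom just verified, to the case in which one of the two factors is an algebra generator of $\tilde H_{\mathcal A}$ — any $[M]$ or any $k_\alpha$. The cases where the generator is some $k_\alpha$ follow from $\triangle(k_\alpha)=k_\alpha\otimes k_\alpha$, $k_\alpha k_\beta=k_{\alpha+\beta}$, and the bi-multiplicativity $(\alpha+\beta\,|\,\gamma)=(\alpha\,|\,\gamma)(\beta\,|\,\gamma)$; and the mixed values $\varphi(k_\alpha,k_\beta[N])$, $\varphi(k_\alpha[M],k_\beta)$ are forced to vanish unless $N=0$, resp.\ $M=0$, by the $K_0(\mathcal A)$-grading.

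The one computation with content is then, say for \eqref{h33}, the identity $\varphi([M]*[N],k_\gamma[P])=\sum\varphi([M],(k_\gamma[P])_{(1)})\varphi([N],(k_\gamma[P])_{(2)})$. I would substitute $[M]*[N]=\langle N,M\rangle^{-1}\sum_{[C]}g_{M,N}^{C}[C]$, the coproduct \eqref{cop1} for $\triangle(k_\gamma[P])$, and the explicit $\varphi$. Since $\varphi([C],k_\gamma[P])=(C\,|\,C)(C\,|\,\gamma)\,\delta_{[C],[P]}/|\mathrm{Aut}\,C|$, on the left only $[C]=[P]$ survives and on the right only the term with $[P']=[M]$, $[P'']=[N]$; using $|\Delta^P_{N,M}|=g_{M,N}^P|\mathrm{Aut}\,M||\mathrm{Aut}\,N|$ both sides collapse to a multiple of $g_{M,N}^{P}(P\,|\,P)(P\,|\,\gamma)/|\mathrm{Aut}\,P|$, and equality reduces to
\[(P\,|\,P)(P\,|\,\gamma)=\langle M,N\rangle\langle N,M\rangle\,(M\,|\,M)(N\,|\,N)(M\,|\,N)(M\,|\,\gamma)(N\,|\,\gamma).\]
This holds because $[P]=[M]+[N]$ in $K_0(\mathcal A)$ whenever $g_{M,N}^P\neq0$, because $(\,\cdot\,|\,\cdot\,)$ is symmetric and bi-multiplicative, and because $(M\,|\,N)=\langle M,N\rangle\langle N,M\rangle$ by definition.

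Axiom \eqref{h2} is handled in exactly the same way (or deduced from the self-duality of Green's construction), and \eqref{h44} follows from \eqref{h1}--\eqref{h33} by the standard argument that $(a,b)\mapsto\varphi(\sigma a,b)$ and $(a,b)\mapsto\varphi(a,\sigma b)$ are both convolution-inverse to $\varphi$ — the convolution being well defined because $\tilde H_{\mathcal A}$ is $K_0(\mathcal A)$-graded with finite-dimensional components — hence equal; alternatively it can be checked directly on generators from \eqref{antip}--\eqref{antip2}. I expect the only real difficulty to be organizational rather than conceptual: beyond Green's formula there is no new idea, but one must track with care the separate Euler-form contributions coming from the twisted multiplication, the coproduct \eqref{cop1}, the commutation relation $k_\alpha[M]=(\alpha\,|\,M)[M]k_\alpha$, and the normalization of $\varphi$, and verify that they cancel precisely as in the displayed identity.
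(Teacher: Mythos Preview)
Your proposal is correct and follows essentially the same approach as the paper: verify \eqref{h1} immediately, check one of the multiplicativity axioms by a direct computation in which only one term survives on each side and equality reduces to the bi-multiplicativity of $(\,\cdot\,|\,\cdot\,)$ together with $[P]=[M]+[N]$ in $K_0(\mathcal A)$, then note that the other multiplicativity axiom is analogous and that \eqref{h44} follows formally from \eqref{h1}, \eqref{h2}, \eqref{h33}. The only cosmetic differences are that the paper verifies \eqref{h2} rather than \eqref{h33} explicitly and works directly on the basis elements $k_\alpha[A]$, $k_\beta[B]$, $k_\gamma[C]$ without first reducing to generators.
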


\begin{proof}
It is clear that \eqref{h1} holds. We check \eqref{h2} and
\eqref{h33} follows from a similar argument.

\begin{eqnarray} \nonumber
\varphi(k_{\alpha}[A],
k_{\beta}[B]k_{\gamma}[C])&=&\frac{1}{(B|\gamma)}\varphi(k_{\alpha}[A],
k_{\beta+\gamma}[B][C])\\ \nonumber
&=&\frac{(\alpha|\beta+\gamma)(A|\beta+\gamma)}{(B|\gamma)}\varphi(k_{\alpha}[A],
[B][C])\\ \nonumber &=&
\frac{(\alpha|\beta+\gamma)(A|\beta+\gamma)}{(B|\gamma)\langle C,B
\rangle}\varphi(k_{\alpha}[A], \sum_{[D]} g_{B,C}^D [D])\\ \nonumber
&=& \frac{(\alpha|\beta+\gamma)(A|\beta+\gamma)}{(B|\gamma)\langle
C,B \rangle} \varphi(k_{\alpha}[A], g_{B,C}^A [A])\\ \nonumber &=&
\frac{(\alpha|\beta+\gamma)(A|\beta+\gamma)(\alpha|A)(A|A)g_{B,C}^A}{(B|\gamma)\langle
C,B \rangle |\mbox{Aut} A|} \\ \nonumber &=&
\frac{(\alpha|\beta+\gamma)(B+C|\beta+\gamma)(\alpha|B+C)(B+C|B+C)g_{B,C}^A}{(B|\gamma)\langle
C,B \rangle |\mbox{Aut} A|} \\ \nonumber &=&
\frac{(\alpha|\beta)(\alpha|\gamma)(B|\beta)(B|\gamma)(C|\beta)(C|\gamma)(\alpha|B)(\alpha|C)(B|B)(B|C)(B|C)(C|C)g_{B,C}^A}{(B|\gamma)\langle
C,B \rangle |\mbox{Aut} A|} \\ \nonumber &=&
\frac{(\alpha|\beta)(\alpha|\gamma)(B|\beta)(C|\beta)(C|\gamma)(\alpha|B)(\alpha|C)(B|B)(B|C)(C|C)\langle
B,C \rangle g_{B,C}^A}{|\mbox{Aut} A|}
\end{eqnarray}

\begin{eqnarray} \nonumber
\varphi(\triangle(k_{\alpha}[A]), k_{\beta}[B]\otimes k_{\gamma}[C])
&=& \varphi(\sum_{[A'],[A'']} \langle A', A'' \rangle
\frac{|\Delta_{A'',A'}^A|}{|\mbox{Aut}A|} k_{\alpha+A''}[A']\otimes
k_{\alpha}[A''], k_{\beta}[B]\otimes k_{\gamma}[C]) \\ \nonumber &=&
\varphi(\langle B, C \rangle
\frac{|\Delta_{C,B}^A|}{|\mbox{Aut}A|}k_{\alpha+C}[B]\otimes
k_{\alpha}[C], k_{\beta}[B]\otimes k_{\gamma}[C])\\ \nonumber &=&
\frac{\langle B, C \rangle |\Delta_{C,B}^A|
(\alpha+C|\beta)(\alpha+C|B)(B|\beta)(B|B)(\alpha|\gamma)(\alpha|C)(C|\gamma)(C|C)}{|\mbox{Aut}A||\mbox{Aut}B||\mbox{Aut}C|}\\
\nonumber &=& \frac{g_{B,C}^A \langle B, C \rangle (\alpha|\beta)(C|\beta)(\alpha|B)(C|B)(B|\beta)(B|B)(\alpha|\gamma)(\alpha|C)(C|\gamma)(C|C)}{|\mbox{Aut}A|}\\
\nonumber
\end{eqnarray}
Together, $\varphi(k_{\alpha}[A],
k_{\beta}[B]k_{\gamma}[C])=\varphi(\triangle(k_{\alpha}[A]),
k_{\beta}[B]\otimes k_{\gamma}[C]),$ verifying \eqref{h2}. Together, the conditions \eqref{h1}, \eqref{h2}, and
\eqref{h33} imply \eqref{h44}, and we are done.

\end{proof}

\section{Double Hall algebras}

In this section we let $\mathcal{A}$ be a finitary hereditary Abelian category. Without the requirement that every object in $\mathcal{A}$ has finitely many subobjects, the sums \eqref{cop1} and \eqref{antip} may be infinite. However, there are weaker conditions satisfied by finitary categories that will allow us to apply the Drinfeld double construction to $\tilde{H}_{\mathcal{A}}$. The following definition and lemma are adapted from Definition B.2 and Lemma B.3 in \cite{BS1}.

\begin{definition}

Given two objects $A,B\in \mathcal{A},$ an \textit{anti-equivalence} between $A$ and $B$ is a pair of strict filtrations
\[(0=L_{n+1}\subset L_n\subset \cdots \subset L_1 \subset L_0=A, 0=M_{n+1}\subset M_n\subset \cdots \subset M_1 \subset M_0=B)\]
such that $L_i/L_{i+1} \simeq M_{n-i}/M_{n-i+1}$ for all $i.$ We say that two objects $A$ and $B$ are \textit{anti-equivalent} if there exists at least one anti-equivalence between them.

\end{definition}

\begin{lemma}

1) Given any two objects $A,B\in \mathcal{A},$ there exist finitely many anti-equivalences between $A$ and $B.$

2) Let $A$ and $B$ be any two objects in $\mathcal{A}.$ Then there are finitely many pairs of anti-equivalent objects $(A',B')$ such that $A'$ is a subobject of $A$ and $B'$ is a quotient of $B.$

\end{lemma}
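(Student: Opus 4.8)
The plan is to deduce both statements from the finitary hypothesis on $\mathcal{A}$, using the homomorphism $d:K_0(\mathcal{A})\to\mathbb{Z}^k$ to bound the combinatorics. First I would observe that an anti-equivalence between $A$ and $B$ in particular produces a filtration $L_\bullet$ of $A$ whose successive quotients $X_i:=L_i/L_{i+1}$ satisfy $\sum_i[X_i]=[A]$ in $K_0(\mathcal{A})$, and simultaneously these same objects (in reversed order) are the successive quotients of a filtration of $B$, forcing $[A]=[B]$. So it suffices to fix a class $\alpha=[A]=[B]$ and bound, first, the number of ordered tuples $(X_1,\dots,X_n)$ of nonzero objects with $\sum[X_i]=\alpha$ that can occur, and second, for each such tuple, the number of filtrations of $A$ with those quotients and the number of filtrations of $B$ with the reversed quotients — the latter two are finite because each object has only finitely many subobjects of a given class once we know $\mathrm{Hom}$ and $\mathrm{Ext}^1$ are finite (more precisely, the structure constants $g^{A}_{X_1,\dots,X_n}$ and $g^{B}_{X_n,\dots,X_1}$ are finite).

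The heart of statement 1) is therefore: given $\alpha\in K_0^+(\mathcal{A})$, only finitely many classes $\beta\in K_0^+(\mathcal{A})$ satisfy $\beta\le\alpha$ (meaning $\alpha-\beta\in K_0^+(\mathcal{A})$), and consequently only finitely many ordered tuples $(X_1,\dots,X_n)$ of nonzero objects with partial sums bounded by $\alpha$. Here I would use condition (i): applying $d$, each $d([X_i])$ lies in $\mathbb{N}^k\cup(\mathbb{N}^{k-1}\setminus\{0\})\times\mathbb{Z}$, and the partial sums $d([X_1])+\dots+d([X_j])$ all equal some $d(L_0/L_{j+1})\in d(K_0^+(\mathcal{A}))$, lying in the same cone and being coordinatewise (in the first $k-1$ coordinates) bounded by $d(\alpha)$. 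Because every nonzero object has $d([X_i])\ne 0$ by (ii), and because in the cone $\mathbb{N}^k\cup(\mathbb{N}^{k-1}\setminus\{0\})\times\mathbb{Z}$ a nonzero element that is $\le$ a fixed element (in the first $k-1$ coordinates nonneg. and bounded, last coordinate unconstrained only when one of the first $k-1$ is positive) still has only finitely many predecessors — I need to check that the "bad" direction (the last coordinate, unbounded) cannot be used infinitely often: it can only be used while the first $k-1$ coordinates of the partial sum are not all zero and not yet equal to $d(\alpha)$'s, but actually I must argue more carefully that the number of summands $n$ itself is bounded. This is the main obstacle: a priori one could have arbitrarily long filtrations whose quotients all have $d$-image of the form $(0,\dots,0,c)$, and I must rule this out. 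The resolution is that if two consecutive — indeed all — quotients had first $k-1$ coordinates zero, their classes in $K_0$ would be supported in the "finite" part; but condition (i) says $d(K_0^+)\subset \mathbb{N}^k\cup(\mathbb{N}^{k-1}\setminus\{0\})\times\mathbb{Z}$, so an object with $d$-image $(0,\dots,0,c)$ must have $c\ge 0$, hence such quotients contribute nonnegatively to the last coordinate too, and their total is bounded by the last coordinate of $d(\alpha)$ once the genuinely-infinite-type quotients are accounted for; combined with (ii) this bounds $n$ and the number of tuples.

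Finally, for statement 2), I would argue as follows: a pair $(A',B')$ with $A'\subset A$, $B'$ a quotient of $B$, and $A',B'$ anti-equivalent satisfies $[A']=[B']=:\beta$ with $\beta\le[A]$ and $[B]-\beta\in K_0^+(\mathcal{A})$ (since $B'$ is a quotient of $B$, the kernel has class $[B]-\beta$). By the finiteness of $\{\beta:\beta\le[A]\}$ just established there are finitely many possible classes $\beta$; for each, there are finitely many subobjects $A'\subset A$ of class $\beta$ (as $A$ has finitely many subobjects — equivalently, finitely many with given Grothendieck class, which follows from $\mathrm{Hom}$ and $\mathrm{Ext}^1$ finiteness) and finitely many quotients $B'$ of $B$ of class $\beta$; and for each resulting pair $(A',B')$, being anti-equivalent is a property, not extra data, so the count is finite. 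This completes the plan; the one genuinely delicate point, as noted, is the uniform bound on filtration length in part 1), which is exactly where the shape of the cone in condition (i) and the injectivity-type condition (ii) are both essential.
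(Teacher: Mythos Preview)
Your approach has a genuine gap: the central claim that ``given $\alpha\in K_0^+(\mathcal{A})$, only finitely many classes $\beta\in K_0^+(\mathcal{A})$ satisfy $\beta\le\alpha$'' is false in the very examples the paper is built for. Take $\mathcal{A}=Coh(\mathbb{P}^1_{\mathbb{F}_q})$ with $d=(\mathrm{rank},\mathrm{deg})$ and $\alpha=[\mathcal{O}]=(1,0)$. Then $\beta_n=[\mathcal{O}(-n)]=(1,-n)$ lies in the cone for every $n\ge 0$ and $\alpha-\beta_n=(0,n)\in\mathbb{N}^2$, so $\beta_n\le\alpha$ for all $n$. Correspondingly, $\mathcal{O}$ admits arbitrarily long strict filtrations $0\subset\mathcal{O}(-n)\subset\cdots\subset\mathcal{O}(-1)\subset\mathcal{O}$, so neither the number of ``admissible tuples'' nor the filtration length is bounded by $K_0$-data alone. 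Your attempted fix (``quotients with first $k-1$ coordinates zero have last coordinate $\ge 0$'') does not help, because a single quotient with a positive first coordinate can carry an arbitrarily negative last coordinate, leaving an unbounded positive budget for the rest: $(1,0)=(1,-N)+N\cdot(0,1)$. The same issue wrecks your argument for part 2): $A$ can have infinitely many subobjects (e.g.\ all $\mathcal{O}(-D)\subset\mathcal{O}$), so ``finitely many $\beta$, then finitely many subobjects of each class'' does not give a finite total.

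What you are missing is that the finiteness comes from the \emph{interaction} between $A$ and $B$, not from $K_0$. In any anti-equivalence the bottom term $L_n$ of the $A$-filtration is isomorphic to the top quotient $B/M_1$ of the $B$-filtration, so $L_n$ is simultaneously a subobject of $A$ and a quotient of $B$; such objects lie in the (finite!) set of images of morphisms $B\to A$, since $|\mathrm{Hom}(B,A)|<\infty$. The paper exploits exactly this: it strips off $L_n$ (ranging over this finite set $S(A,B)$), passes to the pair $(A/L_n,\,M_1)$, and inducts on $d_2(A,B)=\min(d(A),d(B))$ in the lexicographic order. Part 2) is handled by the same reduction. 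In the $\mathbb{P}^1$ example this is precisely why the infinitely many filtrations of $\mathcal{O}$ do not produce infinitely many anti-equivalences with a fixed $B$: the bottom piece $\mathcal{O}(-n)$ would have to be a quotient of $B$, which fails for all but finitely many $n$.
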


\begin{proof}
We define an order on $K^+_0(\mathcal{A})\times K^+_0(\mathcal{A})$ by setting $d_2(A,B)=\min(d(A),d(B)),$ where $d:K_0(\mathcal{A})\rightarrow \mathbb{Z}^k$ is the homomorphism in Definition 1 and $\mathbb{Z}^k$ is ordered lexicographically. We prove both parts of the lemma using induction on the pair $(A,B),$ with the case of $d_2(A,B)\in (0,\cdots, 0, 1)$ being trivial in each case.

Let $\Sigma(A,B)$ denote the set of anti-equivalences between $A$ and $B.$  Let $S(A,B)$ denote the set of objects in $\mathcal{A}$ that are both subobjects of $A$ and quotients of $B.$ Since $|\mbox{Hom}(B,A)|<\infty,$ the set $S(A,B)$ is finite. If
\[(0=L_{n+1}\subset L_n\subset \cdots \subset L_1 \subset L_0=A, 0=M_{n+1}\subset M_n\subset \cdots \subset M_1 \subset M_0=B)\]
is an anti-equivalence between $A$ and $B,$ then $L_n\in S(A,B).$ For an object $L\in S(A,B),$ let $\Sigma_L(A,B)$ denote the set of anti-equivalences between $A$ and $B$ such that $L$ is the first non-zero term in the corresponding filtration of $A.$ Then we have
\begin{equation} \la{antieq}
\Sigma(A,B)=\bigcup_{L\in S(A,B)} \Sigma_L(A,B).
\end{equation}
Given an anti-equivalence between $A$ and $B$ of the form
\[(0=L_{n+1}\subset L_n\subset \cdots \subset L_1 \subset L_0=A, 0=M_{n+1}\subset M_n\subset \cdots \subset M_1 \subset M_0=B),\]
we can associate a pair of filtrations
\[(0 \subset L_{n-1}/L_n \subset \cdots \subset L_1/L_n \subset A/L_n, 0=M_{n+1}\subset M_n\subset \cdots \subset M_1)\]
that is an anti-equivalence between $A/L_n$ and $M_1.$ In fact, this establishes a bijection between the sets $\Sigma_{L_n}(A,B)$ and $\Sigma(A/L_n, M_1).$ Because the union in \eqref{antieq} is taken over the finite set $S(A,B),$ the first part of the lemma follows by induction.

Now suppose that $A'$ is a subobject of $A$ and $B'$ is a quotient of $B.$ Then it can also be shown that
\[\Sigma(A',B')=\bigcup_{L\in S(A,B)} \Sigma_L(A',B').\]
So to prove the second part of the lemma, it suffices to show that for any $L\in S(A,B),$ there are finitely many pairs of objects $(A',B')$ such that 

(a) $A'$ is a subobject of $A,$ 

(b) $B'$ is a quotient of $B,$ and 

(c) $\Sigma_L(A',B')$ is non-empty.

If $(A',B')$ satisfies the conditions (a) and (b), then $A'/L$ is a subobject of $A/L$ and $\mathrm{ker}(B'\twoheadrightarrow L)$ is a quotient of $\mathrm{ker}(B\twoheadrightarrow L).$ Moreover, if the pair $(A',B')$ satisfies condition (c), then $\Sigma(A'/L,\mathrm{ker}(B'\twoheadrightarrow L))$ is non-empty and the objects $A'/L$ and $\mathrm{ker}(B'\twoheadrightarrow L)$ are anti-equivalent. By our induction hypothesis, this says that there are only a finite number of values that $A'/L$ and $\mathrm{ker}(B'\twoheadrightarrow L)$ can take if $A'$ and $B'$ satisfy the three conditions above. Because $|\mbox{Ext}^1(M,N)|<\infty$ for any $M,N\in \mathcal{A},$ it follows that there are finitely many pairs of objects $(A',B')$ satisfying (a)-(c).

\end{proof}

We define the inverse antipode $\sigma^{-1}:\tilde{H}_{\mathcal{A}}\rightarrow \tilde{H}_{\mathcal{A}}$ by

\begin{equation} \la{invant1}
\sigma^{-1}([A])=\frac{1}{|\mbox{Aut}A|} \left( \sum_{n=1} (-1)^n \sum_{L_{\bullet}\in F_{A,n}} \prod_{i=1}^n \langle L_i/L_{i+1}, L_{i+1} \rangle |\mbox{Aut} L_i/L_{i+1}| [L_{n -i+1}/L_{n-i+2}] \right) k_A^{-1}
\end{equation}
and
\begin{equation} \la{invantk}
\sigma^{-1}(k_{\alpha})=k_{\alpha}^{-1}.
\end{equation}
Although the sum in \eqref{invant1} may be infinite as well, the coefficient of a given basis element $[B]$ will be finite by Lemma 2.

Now, using \eqref{h44} to replace $\sigma$ with $\sigma^{-1},$ one can show that the relations \eqref{d1}-\eqref{d4} define an associative multiplication on the tensor product $\tilde{H}_{\mathcal{A}}\otimes \tilde{H}^{coop}_{\mathcal{A}}$ (equivalently, $\tilde{H}_{\mathcal{A}}\otimes \tilde{H}_{\mathcal{A}}$) by similar arguments as in the case when $\tilde{H}_{\mathcal{A}}$ is a Hopf algebra (see e.g. \cite{J}). The only essential difference is that we must first show that the sums \eqref{d4} and \eqref{d5} are finite.

\begin{proposition}
Let $\Gamma=\tilde{H}_{\mathcal{A}}$ and $\Lambda=\tilde{H}^{coop}_{\mathcal{A}}.$ Then the relations \eqref{d4} and \eqref{d5} consist of finite sums for any elements $a\in \Gamma, b\in \Lambda.$
\end{proposition}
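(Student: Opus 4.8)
The plan is to analyze the right-hand side of \eqref{d4}, which after unravelling the coproduct formulas \eqref{cop1} and \eqref{antip}–\eqref{antip2} becomes a sum indexed by filtrations. Since $\Gamma$ and $\Lambda$ are spanned by elements of the form $k_\alpha[M]$, and by Lemma 1 it suffices to treat the case $a = k_\alpha[A]$, $b = k_\beta[B]$ with $[A],[B]\in\mathcal I$. Because $\varphi$ vanishes unless the Hall-algebra components of its two arguments are isomorphic (this is the $\delta_{[M],[M']}$ in the Proposition of Section 4), the term $\varphi(a_{(1)},\sigma_\Lambda(b_{(1)}))\,a_{(2)}\otimes b_{(2)}\,\varphi(a_{(3)},b_{(3)})$ is nonzero only when the appropriate subquotients of $A$ and $B$ appearing in the iterated coproduct of $[A]$ and in $\sigma(b_{(1)})$, $[B]$ match up. Concretely, $\triangle^{(2)}([A])$ is supported on triples of subquotients $(A_1,A_2,A_3)$ coming from a two-step filtration of $A$, and likewise for $[B]$; the $\varphi$-factors force $A_1$ (resp. $A_3$) to be isomorphic to a subquotient occurring in the expansion of $\sigma([B])$ (resp. in $\triangle$ of $[B]$). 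The key observation is that the surviving index data amounts precisely to choosing, inside $A$ and inside $B$, a pair of objects $(A',B')$ that are \emph{anti-equivalent} in the sense of the Definition preceding Lemma 2, together with bounded combinatorial data (finite filtrations of the remaining subquotients, whose total Grothendieck class is pinned down).

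First I would make this precise: fix a basis element $k_\gamma[C]\otimes k_\delta[D]$ appearing on the right of \eqref{d4}, and show its coefficient is a finite sum. Expanding $\triangle^{(2)}$ on $[A]$ via \eqref{cop1} and the antipode via \eqref{antip} on the relevant tensor factor of $b$, one sees that nonvanishing of the $\varphi$-pairings pins down the classes in $K_0(\mathcal A)$ of all the subquotients involved (the $k_\alpha$'s and the arguments of $\varphi$ must have matching grading since $\tilde H_{\mathcal A}$ is $K_0$-graded). In particular the first tensor factor of $\triangle^{(2)}([A])$ must have class $[C]$, the last factor's pairing against part of $\triangle([B])$ forces that part to have class $[C]$ as well (up to isomorphism, not just class, because of $\delta_{[M],[M']}$), and $[D]$ is determined similarly from the remaining factor of $[B]$. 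The upshot is that the data is: a subobject $A'\subset A$ together with a filtration of it, a quotient $B'$ of $B$ together with a filtration of it, such that these two filtrations are anti-equivalent to each other, plus a filtration of $A/A'$ realizing $\sigma^{-1}$ and a matching filtration on the $B$ side — and all relevant classes are fixed once $[C]$, $[D]$, $\alpha$, $\beta$ are fixed.

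Then I would invoke Lemma 2 directly. Part 2 of Lemma 2 says there are only finitely many anti-equivalent pairs $(A',B')$ with $A'\subset A$ and $B'$ a quotient of $B$; part 1 says that for each such pair there are only finitely many anti-equivalences (i.e.\ finitely many matching filtrations). The complementary filtrations of $A/A'$ and of the corresponding subobject of $B$ that come from the antipode terms are again controlled by Lemma 2 — more precisely, the antipode $\sigma^{-1}$ expanded against $[B]$ contributes another anti-equivalence-type pairing whose finiteness is exactly what was already used to make \eqref{invant1} well-defined on each basis element (as remarked after \eqref{invantk}). So the coefficient of $k_\gamma[C]\otimes k_\delta[D]$ in \eqref{d4} is a finite sum of products of structure constants and powers of $q$. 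Finiteness of \eqref{d5} follows either by the same bookkeeping or formally from the equivalence of \eqref{d4} and \eqref{d5} noted in Section 2 together with Lemma 1.

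I expect the main obstacle to be the bookkeeping in the first step: correctly identifying, after expanding the iterated coproduct and the (inverse) antipode, exactly which filtration data is being summed over and checking that the $\varphi$-induced constraints really do cut it down to "an anti-equivalent pair $(A',B')$ plus bounded extra data," rather than merely constraining Grothendieck classes. Once the indexing set is correctly matched to the hypotheses of Lemma 2 (and to the finiteness already established for $\sigma^{-1}$ on basis elements), the conclusion is immediate.
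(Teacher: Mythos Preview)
Your approach is essentially the paper's: reduce to $a=k_\alpha[A]$, $b=k_\beta[B]$, expand the iterated coproduct and the inverse antipode, and use the $\delta_{[M],[M']}$ in $\varphi$ together with Lemma~2 to bound the index set. Two points where the paper's execution is sharper than your sketch. First, the paper shows directly that the \emph{entire} sum on the right of \eqref{d4} has only finitely many nonzero terms, rather than fixing a target basis element and bounding its coefficient; the proposition asserts the former, and while your Lemma~2 argument in fact delivers it, your framing (``fix a basis element $k_\gamma[C]\otimes k_\delta[D]$ \dots\ show its coefficient is a finite sum'') is aimed at the weaker statement. Second, the paper separates the two $\varphi$-factors cleanly: writing $\triangle^2([A])$ over filtrations $L_2\subset L_1\subset A$ and $\triangle^2_\Lambda([B])$ over $M_2\subset M_1\subset B$, the factor $\varphi([L_2],[B/M_1])$ forces an isomorphism $L_2\cong B/M_1$, and this is controlled simply by $|\mathrm{Hom}(B,A)|<\infty$ with no anti-equivalence needed; only the remaining factor $\varphi(\sigma^{-1}([A/L_1]),[M_2])$ invokes Lemma~2, and there $A/L_1$ is a \emph{quotient} of $A$ while $M_2$ is a \emph{subobject} of $B$ --- the reverse of the roles you wrote down. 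Once the two constraints are disentangled this way, the ``bookkeeping obstacle'' you anticipated essentially vanishes: there is no need to track the output basis element or the extra filtration data from $\sigma^{-1}$ separately, since finiteness of $(L_2,M_1)$ and of $(L_1,M_2)$ already makes the whole sum finite.
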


\begin{proof}
First, we rewrite the sum \eqref{cop1} as

\[\triangle([A]) = \sum_{L \subset A} \langle A/L, L \rangle \frac{|\mbox{Aut} A/L||\mbox{Aut} L|}{|\mbox{Aut} A|} k_L [A/L]\otimes [L]\]
for $A\in \mathcal{A}$. By coassociativity, we have

\[\triangle^2 ([A]) = \sum_{L_2 \subset L_1 \subset A} \langle A/L_2, L_2 \rangle \langle L_1/L_2, L_2 \rangle \frac{|\mbox{Aut} A/L_1||\mbox{Aut} L_1/L_2||\mbox{Aut} L_2|}{|\mbox{Aut} A|} k_{L_1} [A/L_1] \otimes k_{L_1-L_2} [L_1/L_2] \otimes [L_2]. \]

If we set $a=k_{\alpha}[A]$ and $b=k_{\beta}[B],$ the right hand side of \eqref{d4} is a sum of terms

\begin{equation} \la{rhs}
C_{L_{\bullet},M_{\bullet},\alpha, \beta} \cdot \varphi(\sigma^{-1}([A/L_1]), [M_2]) \cdot \varphi([L_2], [B/M_1]) \cdot k_{L_1-L_2}[L_1/L_2]\otimes k_{M_1-M_2}[M_1/M_2]
\end{equation}
over pairs of filtrations $(L_2 \subset L_1 \subset A, M_2 \subset M_1 \subset B).$ Here $C_{L_{\bullet},M_{\bullet},\alpha, \beta}$ is a complex number depending on $\alpha, \beta,$ and the pair of filtrations. Because $\mbox{Hom}(B,A)$ is finite, the term 
\[\varphi([L_2], [B/M_1])\]
is $0$ for all but a finite number of choices of $L_2$ and $M_1.$ Similarly, by Lemma 2, the term
\[\varphi(\sigma^{-1}([A/L_1]), [M_2])\]
will be $0$ for all but a finite number of $L_1$ and $M_2.$ It follows that the right hand side of \eqref{d4} is a finite sum in this case. Since $\tilde{H}_{\mathcal{A}}$ and $\tilde{H}^{coop}_{\mathcal{A}}$ are each spanned by elements of the form $k_{\alpha}[A]$ and $k_{\beta}[B],$ we can conclude that \eqref{d4} consists of finite sums for all $a\in \Gamma$ and $b\in \Lambda.$ A similar argument shows that the sums in \eqref{d5} are finite.

\end{proof}

We can now look at the algebra structure on $\tilde{H}_{\mathcal{A}}\otimes \tilde{H}_{\mathcal{A}}$ in some detail. Substituting $a=k_{\alpha}[A]$ and $b=k_{\beta}[B]$ into \eqref{d5} gives us

\begin{equation} \la{eq1}
\sum_{[L],[M],[N]} \frac{\langle A, B \rangle \langle B, B
\rangle}{\langle L, N \rangle \langle N, N \rangle}
\frac{|\Delta_{M,L}^A||\Delta_{N,M}^B|}{|\mbox{Aut} A| |\mbox{Aut}
B| |\mbox{Aut} M|} (k_M [L]\otimes 1)(1\otimes [N]) =
\end{equation}

\begin{equation} \la{eq2}
\sum_{[L],[M],[N]} \frac{\langle B, A \rangle \langle A, A
\rangle}{\langle N, L \rangle \langle L, L \rangle}
\frac{|\Delta_{L,M}^A||\Delta_{M,N}^B|}{|\mbox{Aut} A| |\mbox{Aut}
B| |\mbox{Aut} M|}(1\otimes k_M[N])([L]\otimes 1).
\end{equation}

Note that this identity is independent of $k_{\alpha}$ and
$k_{\beta}.$ For any $A,B,L,N\in \mathcal{A},$ the four-term exact sequence

\[0\rightarrow N\rightarrow B\rightarrow A\rightarrow L\rightarrow
0\] has a unique decomposition into two short exact sequences

\[0\rightarrow N\rightarrow B\rightarrow M\rightarrow 0\]
and
\[0\rightarrow M\rightarrow A\rightarrow L\rightarrow 0,\]
and conversely, any pair of short exact sequences of this form
determines a unique four-term exact sequence. It follows that for fixed $[L], [N] \in \mathcal{I},$

\begin{equation} \la{four}
\sum_{[M]}
\frac{|\Delta_{M,L}^A||\Delta_{N,M}^B|}{|\mbox{Aut}A||\mbox{Aut}B||\mbox{Aut}
M|}=\frac{|\{\phi:B\rightarrow A|\mbox{Ker}(\phi)\simeq N,
\mbox{Coker}(\phi)\simeq
L\}||\mbox{Aut}N||\mbox{Aut}L|}{|\mbox{Aut}A||\mbox{Aut}B|}.
\end{equation}

It will be useful for us to rewrite \eqref{eq1}-\eqref{eq2} in terms of exact triangles. Define

\[g_{A_{\bullet},
B_{\bullet}}^{C_{\bullet}}=\frac{|\Delta_{B_{\bullet},
A_{\bullet}}^{C_{\bullet}}|}{|\mbox{Aut} A_{\bullet}||\mbox{Aut}
B_{\bullet}|},\] where $\Delta_{B_{\bullet},
A_{\bullet}}^{C_{\bullet}}$ denotes the set of exact triangles

\[B_{\bullet}\rightarrow C_{\bullet} \rightarrow A_{\bullet}
\rightarrow B_{\bullet}[1]\] in $D^b(\mathcal{A}),$ the bounded
derived category of $\mathcal{A}$ (\cite{GM}). This set is finite because $\mbox{Hom}(M,N)$ and $\mbox{Ext}^1(M,N)$ are finite sets for any $M,N\in \mathcal{A}.$ We adapt the
following lemma from Proposition 2.4.3 in \cite{K2}:

\begin{lemma}

For any objects $A,B,L,N \in \mathcal{A},$

\begin{equation} \nonumber
\sum_{[M]} \frac{|\Delta_{M,L}^A||\Delta_{N,M}^B|}{|Aut A||Aut
B||Aut M|}=\frac{g_{B[1],A}^{N[1]\oplus L}}{|Ext^1(L,N)|}
\end{equation}

\end{lemma}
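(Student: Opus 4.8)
The plan is to unwind both sides of the claimed identity into counts of exact triangles in $D^b(\mathcal{A})$ and match them against the left-hand sum over $[M]$, which we already understand via the four-term exact sequence decomposition. First I would use \eqref{four} to rewrite the left side as
\[
\frac{|\{\phi:B\rightarrow A \mid \mathrm{Ker}(\phi)\simeq N,\ \mathrm{Coker}(\phi)\simeq L\}|\,|\mbox{Aut}N|\,|\mbox{Aut}L|}{|\mbox{Aut}A|\,|\mbox{Aut}B|},
\]
so the task becomes showing this equals $g_{B[1],A}^{N[1]\oplus L}/|\mathrm{Ext}^1(L,N)|$. Equivalently, since $g_{B[1],A}^{N[1]\oplus L} = |\Delta^{N[1]\oplus L}_{B[1],A}|/(|\mbox{Aut}B[1]||\mbox{Aut}A|)$ and $|\mbox{Aut}B[1]| = |\mbox{Aut}B|$, it suffices to prove
\[
|\Delta^{N[1]\oplus L}_{B[1],A}| = \frac{|\{\phi:B\rightarrow A \mid \mathrm{Ker}\,\phi\simeq N,\ \mathrm{Coker}\,\phi\simeq L\}|\,|\mbox{Aut}N|\,|\mbox{Aut}L|}{|\mathrm{Ext}^1(L,N)|}.
\]

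The key step is to set up a bijection between the set of triangles $B[1]\rightarrow N[1]\oplus L \rightarrow A \rightarrow B[2]$ and appropriate data on the abelian category side. Rotating, such a triangle is the same as a triangle $A[-1] \rightarrow B[1] \rightarrow N[1]\oplus L \rightarrow A$, i.e.\ a morphism $N[1]\oplus L \rightarrow A$ with cone $B[2]$, or better, the original triangle describes $A$ as the cone of a map $B[1]\to N[1]\oplus L$. A map $B[1]\to N[1]\oplus L$ in $D^b(\mathcal A)$ has two components: $B[1]\to N[1]$, which is an element of $\mathrm{Hom}(B,N)$, and $B[1]\to L$, which is an element of $\mathrm{Ext}^1(B,L)\cong\mathrm{Hom}_{D^b}(B,L[1])$ — wait, $\mathrm{Hom}_{D^b}(B[1],L) = \mathrm{Hom}_{D^b}(B,L[-1]) = \mathrm{Ext}^{-1}(B,L) = 0$. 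So the only component is $f\in\mathrm{Hom}(B,N)$, and the cone of $(f,0):B[1]\to N[1]\oplus L$ is $\mathrm{Cone}(f)[1]\oplus L[1]$... that is still not $A$ sitting in degree $0$. The correct reading is that $A$ in degree $0$ forces us to rotate so that $A$ is the base: the triangle is $N[1]\oplus L \to A[?]$... I would instead directly analyze triangles $B[1]\xrightarrow{u} N[1]\oplus L \xrightarrow{v} A \xrightarrow{w} B[2]$ by noting $\mathrm{Hom}_{D^b}(N[1]\oplus L, A) = \mathrm{Hom}(L,A)\oplus\mathrm{Ext}^{-1}(N,A)=\mathrm{Hom}(L,A)$ and $\mathrm{Hom}_{D^b}(A,B[2])=\mathrm{Ext}^2(A,B)=0$ by heredity, so $w=0$ and the triangle splits off, giving $A \simeq B[1]$... which is absurd unless I track the connecting maps correctly. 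The resolution, following Kapranov's Proposition 2.4.3, is that the relevant triangle has $w\colon A\to B[2]=B[1][1]$ \emph{nonzero} is impossible, so one must instead consider the rotation $A \xrightarrow{} B[2] \to (N[1]\oplus L)[1] \to A[1]$, i.e.\ read the data as: a map $g\colon A \to B[2]$ is zero, hence the triangle is determined by the splitting, contradiction — therefore the honest approach is to compute $g_{B[1],A}^{N[1]\oplus L}$ via the long exact sequence obtained by applying $\mathrm{Hom}(-, -)$ and to count directly.

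Concretely, the clean path: a triangle $B[1]\to N[1]\oplus L\to A\to B[2]$ rotated once is $L\oplus N[1]\to A\to B[2]\to L[1]\oplus N[2]$, and rotated back the other way is $A[-1]\to B[1]\to N[1]\oplus L\to A$. Take the last form. A morphism $\psi\colon B[1]\to N[1]\oplus L$ decomposes as $(\psi_1,\psi_2)$ with $\psi_1\in\mathrm{Hom}_{D^b}(B[1],N[1])=\mathrm{Hom}(B,N)$ and $\psi_2\in\mathrm{Hom}_{D^b}(B[1],L)=\mathrm{Ext}^{-1}(B,L)=0$. So $\psi=(\varphi,0)$ with $\varphi\in\mathrm{Hom}(B,N)$, and $\mathrm{Cone}(\psi)=\mathrm{Cone}(\varphi)[1]\oplus L$. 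We need $\mathrm{Cone}(\psi)\simeq A$, forcing $\mathrm{Cone}(\varphi)[1]\oplus L\simeq A$ in $D^b(\mathcal{A})$; since $A$ is concentrated in degree $0$, this forces $\mathrm{Cone}(\varphi)$ concentrated in degree $-1$, i.e.\ $\varphi\colon B\to N$ surjective with kernel $K$, so $\mathrm{Cone}(\varphi)=K[1]$ and we need $K\oplus L\simeq A$. That is not what we want either — so $N$ must be a \emph{quotient}, not a kernel. I therefore suspect the intended reading has $B[1]$ meaning something whose role is as the \emph{cokernel shifted}, and the correct bijection is: triangles in $\Delta^{N[1]\oplus L}_{B[1],A}$ correspond to morphisms $\phi\colon B\to A$ with $\ker\phi\simeq N$ and $\mathrm{coker}\,\phi\simeq L$, via sending $\phi$ to the triangle built from the complex $[B\xrightarrow{\phi}A]$ (in degrees $-1,0$) whose cohomology is $N[1]\oplus L$ when... no, $H^{-1}=\ker\phi=N$ and $H^0=\mathrm{coker}\,\phi=L$, and a two-term complex with those cohomologies is \emph{not} split in general.

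So the real content, and the main obstacle, is the following: given $\phi\colon B\to A$ with $\ker\phi\simeq N$, $\mathrm{coker}\,\phi\simeq L$, the complex $C_\phi:=[B\xrightarrow{\phi}A]$ has $H^{-1}(C_\phi)\simeq N$, $H^0(C_\phi)\simeq L$, and there is a canonical triangle $L\to C_\phi\to N[1]\to L[1]$ (truncation), whose class in $\mathrm{Ext}^1_{D^b}(N[1],L)=\mathrm{Ext}^2(N,L)=0$ vanishes by heredity, hence $C_\phi\simeq N[1]\oplus L$. Now $C_\phi$ fits in the standard triangle $B\xrightarrow{\phi} A\to C_\phi\to B[1]$, equivalently $A\to C_\phi\to B[1]\to A[1]$, equivalently $B[1]\to A[1]\to C_\phi[1]\to B[2]$; relabeling and using $C_\phi\simeq N[1]\oplus L$ gives (after the appropriate rotation) a triangle of the form counted by $\Delta^{N[1]\oplus L}_{B[1],A}$. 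I would make this correspondence $\phi\mapsto(\text{triangle})$ precise, check it is surjective (every such triangle arises this way, using that $\mathrm{Hom}(A,B[2])=0$ forces the connecting map structure) and compute the fibers: two morphisms $\phi,\phi'$ give isomorphic triangles (as triangles, i.e.\ with identity on $B[1]$ and on $A$) iff they differ by the $\mathrm{Ext}^1(L,N)$-worth of choices of isomorphism $C_\phi\simeq N[1]\oplus L$ together with the $|\mbox{Aut}N||\mbox{Aut}L|$ automorphisms of the object $N[1]\oplus L$, and one must carefully count how the automorphism group $\mathrm{Aut}_{D^b}(N[1]\oplus L)$ — whose order is $|\mbox{Aut}N|\cdot|\mbox{Aut}L|\cdot|\mathrm{Hom}_{D^b}(L,N[1])| = |\mbox{Aut}N||\mbox{Aut}L||\mathrm{Ext}^1(L,N)|$ — acts. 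Tracking these orders is exactly where the factors $|\mbox{Aut}N|$, $|\mbox{Aut}L|$, and $1/|\mathrm{Ext}^1(L,N)|$ come from, matching the right-hand side; this bookkeeping with the $\mathrm{Ext}^1(L,N)$ factor (which enters because $N[1]\oplus L$ has extra endomorphisms not present in $\mathcal A$) is the step I expect to be most delicate, and I would do it by comparing $|\Delta^{N[1]\oplus L}_{B[1],A}|$ against the orbit-counting for the $\mathrm{Aut}_{D^b}(N[1]\oplus L)$-action on the set of pairs (morphism $\phi$, isomorphism $C_\phi\xrightarrow{\sim}N[1]\oplus L$).
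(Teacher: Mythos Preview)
The paper does not actually prove this lemma: it simply cites Kapranov's Proposition~2.4.3 and places a \qed. So there is no argument in the paper to compare against beyond the reference.

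Your eventual approach is the correct one and is essentially Kapranov's: associate to $\phi\colon B\to A$ the two-term complex $C_\phi=[B\xrightarrow{\phi}A]$, observe $H^{-1}(C_\phi)=\ker\phi$ and $H^{0}(C_\phi)=\mathrm{coker}\,\phi$, use heredity ($\mathrm{Ext}^2(N,L)=0$) to split $C_\phi\simeq N[1]\oplus L$, and then count, noting $|\mathrm{Aut}_{D^b}(N[1]\oplus L)|=|\mathrm{Aut}\,N|\,|\mathrm{Aut}\,L|\,|\mathrm{Ext}^1(L,N)|$.

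The long detour in the middle of your write-up, however, comes from misreading the paper's indexing convention. The paper defines
\[
g_{A_\bullet,B_\bullet}^{C_\bullet}=\frac{|\Delta_{B_\bullet,A_\bullet}^{C_\bullet}|}{|\mathrm{Aut}\,A_\bullet|\,|\mathrm{Aut}\,B_\bullet|},
\qquad
\Delta_{B_\bullet,A_\bullet}^{C_\bullet}=\{B_\bullet\to C_\bullet\to A_\bullet\to B_\bullet[1]\},
\]
with the subscripts \emph{swapped}. Hence $g_{B[1],A}^{N[1]\oplus L}$ involves $\Delta_{A,\,B[1]}^{N[1]\oplus L}$, i.e.\ triangles
\[
A\longrightarrow N[1]\oplus L\longrightarrow B[1]\longrightarrow A[1],
\]
and one rotation gives $B\xrightarrow{\phi}A\to N[1]\oplus L\to B[1]$, so the datum is immediately a morphism $\phi\colon B\to A$ with $\mathrm{Cone}(\phi)\simeq N[1]\oplus L$. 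Had you started from this rotation you would have avoided the dead ends with $\mathrm{Hom}(B[1],L)=0$ and the apparent forced splitting $w=0$. Once you read the convention correctly, the bijection and the orbit count you sketch at the end go through cleanly: each $\phi$ with the prescribed kernel and cokernel gives rise to $|\mathrm{Aut}_{D^b}(N[1]\oplus L)|/|\mathrm{Ext}^1(L,N)|=|\mathrm{Aut}\,N|\,|\mathrm{Aut}\,L|$ many triangles in $\Delta_{A,B[1]}^{N[1]\oplus L}$ after accounting for the choice of splitting, which combined with \eqref{four} yields the identity.
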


\qed

As a result of the lemma, \eqref{eq1}-\eqref{eq2} becomes

\begin{equation}
\la{eq3}
 \sum_{[L],[N]} \frac{\langle A, B \rangle \langle B, B
\rangle}{\langle L, N \rangle \langle N, N \rangle}
\frac{g_{B[1],A}^{N[1]\oplus L}}{|\mbox{Ext}^1(L,N)|} (k_{A-L}
[L]\otimes 1)(1\otimes [N]) =
\end{equation}

\begin{equation}
\la{eq4} \sum_{[L],[N]} \frac{\langle B, A \rangle \langle A, A
\rangle}{\langle N, L \rangle \langle L, L \rangle}
\frac{g_{A[1],B}^{L[1]\oplus N}}{|\mbox{Ext}^1(N,L)|}(1\otimes
k_{A-L}[N])([L]\otimes 1).
\end{equation}

The following definition is motivated by the construction of the quantum group $U_q(\mathfrak{g})$ from $U_q(\mathfrak{b}_{+}).$

\begin{definition}
The ``double Hall algebra" $DH_{\mathcal{A}}$ is the quotient of $\tilde{H}_{\mathcal{A}}\otimes \tilde{H}_{\mathcal{A}}$ by the ideal generated by the elements $k_{\alpha}\otimes 1 - 1\otimes k_{\alpha}^{-1}.$
\end{definition}

\section{Derived equivalences and normal form}
The following result can be found in \cite{K2}:

\begin{proposition}
Suppose that $\mathcal{A}$ is hereditary. Then each indecomposable
object of $\mathcal{D}^b(\mathcal{A})$ is of the form $A[i]$ where
$A$ is an indecomposable object in $\mathcal{A}.$

\qed

\end{proposition}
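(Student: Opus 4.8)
The plan is to deduce the proposition from the following decomposition, valid for any hereditary abelian category: for every object $X$ of $\mathcal{D}^b(\mathcal{A})$ there is an isomorphism $X\cong\bigoplus_{i\in\mathbb{Z}}H^i(X)[-i]$. Granting this, let $X$ be indecomposable. It is nonzero, so some cohomology object is nonzero; by indecomposability the displayed decomposition has exactly one nonzero summand, say $H^j(X)=A\neq 0$, whence $X\cong A[-j]$. Finally $A$ is indecomposable in $\mathcal{A}$: a nontrivial splitting $A\cong A_1\oplus A_2$ would produce the nontrivial splitting $X\cong A_1[-j]\oplus A_2[-j]$, contradicting the choice of $X$. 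Setting $i=-j$ yields the asserted form $X\cong A[i]$.

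To prove the decomposition I would induct on the number of degrees $i$ with $H^i(X)\neq 0$; if this number is $0$ or $1$ there is nothing to do. Otherwise let $m$ be the smallest degree with $H^m(X)\neq 0$ and consider the canonical truncation triangle
\[
\tau_{\leq m}X\longrightarrow X\longrightarrow\tau_{>m}X\stackrel{\delta}{\longrightarrow}(\tau_{\leq m}X)[1].
\]
Here $\tau_{\leq m}X=H^m(X)[-m]$, while $\tau_{>m}X$ has cohomology concentrated in degrees $\geq m+1$ and one fewer nonzero cohomology object, so by the inductive hypothesis $\tau_{>m}X\cong\bigoplus_{i\geq m+1}H^i(X)[-i]$. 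Using that $\mathrm{Hom}_{\mathcal{D}^b(\mathcal{A})}(M[r],N[s])=\mathrm{Ext}^{\,s-r}_{\mathcal{A}}(M,N)$ for $M,N\in\mathcal{A}$, together with additivity of $\mathrm{Hom}$ in the first argument, the connecting morphism $\delta$ lies in
\[
\mathrm{Hom}_{\mathcal{D}^b(\mathcal{A})}\!\Bigl(\bigoplus_{i\geq m+1}H^i(X)[-i],\,H^m(X)[-m+1]\Bigr)=\bigoplus_{i\geq m+1}\mathrm{Ext}^{\,i-m+1}_{\mathcal{A}}\bigl(H^i(X),H^m(X)\bigr).
\]
Every exponent $i-m+1$ occurring here is at least $2$, so each summand vanishes because $\mathcal{A}$ is hereditary. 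Thus $\delta=0$, the triangle splits, and $X\cong\tau_{\leq m}X\oplus\tau_{>m}X\cong\bigoplus_i H^i(X)[-i]$, completing the induction.

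The only step requiring real care is the displayed identification of that Hom group; the formula $\mathrm{Hom}_{\mathcal{D}^b(\mathcal{A})}(M[r],N[s])=\mathrm{Ext}^{\,s-r}_{\mathcal{A}}(M,N)$ is standard for the derived category of an abelian category (\cite{GM}), and after extracting the direct sum the vanishing is forced entirely by $\mathrm{Ext}^{\geq 2}_{\mathcal{A}}=0$; everything else is the formal calculus of canonical truncation triangles. As an alternative to the induction, one can instead prove directly that $\mathrm{Hom}_{\mathcal{D}^b(\mathcal{A})}(P,Q)=0$ whenever $P$ has cohomology in degrees $\geq n$ and $Q$ has cohomology in degrees $\leq n-2$, by d\'evissage on $P$ and on $Q$ along their truncation triangles using $\mathrm{Ext}^{\geq 2}_{\mathcal{A}}=0$, and then apply this with $P=\tau_{>m}X$ and $Q=(\tau_{\leq m}X)[1]$ to get $\delta=0$ without the inductive hypothesis. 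I expect no genuine obstacle here, only the bookkeeping of cohomological degrees.
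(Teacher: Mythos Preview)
Your argument is correct and is in fact the standard proof of this well-known fact about hereditary categories. The paper itself does not supply a proof: it simply quotes the result from \cite{K2} and marks it with a \qed. So there is nothing to compare at the level of strategy --- you have filled in what the paper leaves as a citation.

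A couple of minor remarks on presentation. First, your use of the inductive hypothesis to decompose $\tau_{>m}X$ before computing the Hom group is slightly indirect: as you yourself note at the end, one can show $\mathrm{Hom}_{\mathcal{D}^b(\mathcal{A})}(\tau_{>m}X,(\tau_{\leq m}X)[1])=0$ directly by d\'evissage along truncation triangles, using only that $\tau_{>m}X$ has cohomology in degrees $\geq m+1$ while $(\tau_{\leq m}X)[1]$ has cohomology concentrated in degree $m-1$. This avoids invoking the inductive decomposition of $\tau_{>m}X$ before the splitting is established, and some readers find it cleaner. Second, since we are in the \emph{bounded} derived category all the direct sums are finite, so there is no subtlety in passing Hom across the $\bigoplus$; you might say this explicitly. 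Neither point affects correctness.
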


Let $F:\mathcal{D}^b(\mathcal{A})\rightarrow
\mathcal{D}^b(\mathcal{B})$ be a derived equivalence. Define
$\mathcal{A}_i$ to be the full subcategory of $\mathcal{A}$ with
objects $\{A\in \mathcal{A}|F(A)\subset \mathcal{B}[i]\}$ and define
$\mathcal{B}_i=F(\mathcal{A}_i)[-i].$ Then for $A_i\in
\mathcal{A}_i, A_j\in \mathcal{A}_j, B_i\in \mathcal{B}_i, B_j\in
\mathcal{B}_j,$ we have

\vspace{1mm}

\[\mbox{Hom}_{\mathcal{A}}(A_i, A_j)=\mbox{Hom}_{\mathcal{D}^b(\mathcal{B})}(B_i[i], B_j[j])=\mbox{Ext}^{j-i}_{\mathcal{B}}(B_i,B_j)=0, \mbox{  unless } j=i \mbox{ or } j=i+1\]

\[\mbox{Hom}_{\mathcal{B}}(B_i,B_j)=\mbox{Hom}_{\mathcal{D}^b(\mathcal{A})}(A_i[-i],A_j[-j])=\mbox{Ext}_{\mathcal{A}}^{i-j}(A_i,A_j)=0, \mbox{  unless } j=i \mbox{ or } j=i-1\]

\begin{eqnarray} \nonumber
\mbox{Ext}^1_{\mathcal{A}}(A_i,A_j) =
\mbox{Hom}_{\mathcal{D}^b(\mathcal{A})}(A_i,
A_j[1])&=&\mbox{Hom}_{\mathcal{D}^b(\mathcal{B})}(B_i[i],B_j[j+1])\\
\nonumber &=&\mbox{Ext}_{\mathcal{B}}^{j-i+1}(B_i,B_j) = 0, \mbox{
unless } j=i \mbox{ or } j=i-1 \nonumber
\end{eqnarray}

\begin{eqnarray} \nonumber
\mbox{Ext}_{\mathcal{B}}^1(B_i,B_j)=\mbox{Hom}_{\mathcal{D}^b(\mathcal{B})}(B_i,B_j[1])&=&\mbox{Hom}_{\mathcal{D}^b(\mathcal{A})}(A_{i}[-i],A_j[-j+1])\\
\nonumber &=&\mbox{Ext}_{\mathcal{A}}^{i-j+1}(A_i,A_j)=0, \nonumber
\mbox{ unless } j=i \mbox{ or } j=i+1 \nonumber
\end{eqnarray}
If we assume that $\mathcal{A}$ and $\mathcal{B}$ satisfy the
hypotheses of Theorem 1, then it follows that $[A_i\oplus A_j]=
\langle A_j, A_i \rangle [A_i]*[A_j]$ whenever $i<j$ and $A_i\in
\mathcal{A}_i, A_j\in \mathcal{A}_j.$ More generally, any object
$A\in \mathcal{A}$ decomposes as a direct sum $\bigoplus_{i\in S}
A_i$ with $A_i\in \mathcal{A}_i,$ and $[A]$ can be written in normal
form
\[[A]=\prod_{i<j} \langle A_j, A_i \rangle \prod_{i \in S} [A_i],\]
where the indices in the product on the right are in increasing
order. This says that the multiplication map

\begin{equation} \nonumber
m:\bigotimes_{i\in \mathbb{Z}} H_{\mathcal{A}_i}\rightarrow
H_{\mathcal{A}}
\end{equation}
is an isomorphism of vector spaces. Here $H_{\mathcal{A}_i}$ denotes
the subalgebra of $H_{\mathcal{A}}$ generated by elements
$\{[A]|A\in \mathcal{A}_i\}$ (or equivalently, the vector subspace
spanned by the elements $\{[A]|A\in \mathcal{A}_i\}$). Note that we
define the infinite tensor product of algebras as spanned by tensors with all but finitely many factors equal to 1. Moreover, the
multiplication

\begin{equation}
\la{dec2} m: \bigotimes_{i\in \mathbb{Z}}
\mathbb{C}[K_0(\mathcal{A}_i)]\otimes \bigotimes_{i\in \mathbb{Z}}
H_{\mathcal{A}_i} \rightarrow \tilde{H}_{\mathcal{A}}
\end{equation}
is a surjective homomorphism, where $\mathbb{C}[K_0(\mathcal{A}_i)]$
denotes the subalgebra of $\mathbb{C}[K_0(\mathcal{A})]$ generated
by the elements $k_A$ for $A\in \mathcal{A}_i.$

\begin{proposition}

Let $F:D^b(\mathcal{A})\rightarrow D^b(\mathcal{B})$ be a derived
equivalence and define the subcategories $\mathcal{A}_i$ for $i\in
\mathbb{Z}$ as above. The double Hall algebra
$DH_{\mathcal{A}}$ is isomorphic to the free associative
algebra on the vector space $\tilde{H}_{\mathcal{A}} \otimes
\tilde{H}_{\mathcal{A}}$ modulo the relations

\begin{equation}
([A]\otimes 1)([B]\otimes 1)=\frac{1}{\langle B, A \rangle}
\sum_{[C]\in \mathcal{I}} g_{A,B}^C ([C]\otimes 1)
\end{equation}

\begin{equation}
(k_{A} \otimes 1)(k_{B} \otimes 1)=(k_{A+B}\otimes 1)
\end{equation}

\begin{equation}
(k_{A}\otimes 1)([B]\otimes 1)=(k_{A}[B]\otimes 1)=(A|B)([B]\otimes
1)(k_{A}\otimes 1)
\end{equation}

\begin{equation}
(1 \otimes [A])(1 \otimes [B])=\frac{1}{\langle B, A \rangle}
\sum_{[C]\in \mathcal{I}} g_{A,B}^C (1\otimes [C])
\end{equation}

\begin{equation}
(1\otimes k_{A})(1\otimes k_{B})=(1\otimes k_{A+B})
\end{equation}

\begin{equation}
(1\otimes [A])(1\otimes k_{B})=(1\otimes [A]k_{B})=(A|B)(1\otimes
k_{B})(1\otimes [A])
\end{equation}

\begin{equation}
([A]\otimes 1)(1\otimes [B])=([A]\otimes [B])
\end{equation}

\begin{equation}
k_{A}\otimes 1 = 1\otimes k_{A}^{-1}
\end{equation}

\begin{equation}
\sum_{[L],[N]} \frac{\langle A, B \rangle \langle B, B
\rangle}{\langle L, N \rangle \langle N, N \rangle}
\frac{g_{B[1],A}^{N[1]\oplus L}}{|\mbox{Ext}^1(L,N)|} (k_{A-L}
[L]\otimes 1)(1\otimes [N]) =
\end{equation}

\begin{equation}
\sum_{[L],[N]} \frac{\langle B, A \rangle \langle A, A
\rangle}{\langle N, L \rangle \langle L, L \rangle}
\frac{g_{A[1],B}^{L[1]\oplus N}}{|\mbox{Ext}^1(N,L)|}(1\otimes
k_{A-L}[N])([L]\otimes 1).
\end{equation}

for $A\in \mathcal{A}_i, B\in \mathcal{A}_j, i,j\in \mathbb{Z}.$

\end{proposition}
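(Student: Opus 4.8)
The plan is to show that the relations listed in the Proposition already hold in $DH_{\mathcal{A}}$ and that, conversely, they suffice to present $DH_{\mathcal{A}}$. The first seven families of relations, together with $k_\alpha\otimes 1 = 1\otimes k_\alpha^{-1}$, are simply the defining relations of $\tilde H_{\mathcal A}\otimes\tilde H^{coop}_{\mathcal A}$ restricted to generators of the form $[A]$, $k_\alpha$ with $A\in\mathcal A_i$, together with the double-Hall ideal; so the only content is (i) that these generators, as $A$ ranges over $\bigcup_i\mathcal A_i$, really do generate $DH_{\mathcal A}$ as an algebra, and (ii) that the full commutation relation \eqref{d4}/\eqref{d5} between an arbitrary $[A]$ and an arbitrary $[B]$ is a \emph{consequence} of the special cases where $A\in\mathcal A_i$ and $B\in\mathcal A_j$. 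Point (i) is immediate from \eqref{dec2}: the map $m$ there is surjective, so $\tilde H_{\mathcal A}$ — hence $\tilde H_{\mathcal A}\otimes\tilde H_{\mathcal A}$ and its quotient $DH_{\mathcal A}$ — is generated by the $[A_i]$ with $A_i\in\mathcal A_i$ and the $k_{A_i}$; and $k_{A_i}\otimes 1$ equals $1\otimes k_{A_i}^{-1}$ in $DH_{\mathcal A}$, so we may drop the second tensor factor's torus generators. Thus $DH_{\mathcal A}$ is generated by $\{[A_i]\otimes 1, k_{A_i}\otimes 1, 1\otimes[A_i] : A_i\in\mathcal A_i,\ i\in\mathbb Z\}$.

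For point (ii), the key tool is Lemma~3: the equation $D(a,b)$ for products is implied by the equations for the factors. Concretely, write an arbitrary $[A]$ in normal form $[A]=\prod_{i<j}\langle A_j,A_i\rangle\prod_{i\in S}[A_i]$ using the decomposition $A=\bigoplus_{i\in S}A_i$, and similarly $[B]=\prod_{i<j}\langle B_j,B_i\rangle\prod_{i\in S'}[B_i]$. Since $[A]$ is (up to an invertible scalar) a product of the $[A_i]$ and $[B]$ a product of the $[B_j]$, Lemma~3 — applied in $\Gamma=\tilde H_{\mathcal A}$, $\Lambda=\tilde H^{coop}_{\mathcal A}$, and using that the coproduct of $[A_i]$, being $K_0$-graded and supported on subobjects of $A_i$, lands in the subalgebra generated by objects of $\mathcal A_i$ — shows that the relation $D([A],[B])$ follows from the relations $D([A_i],[B_j])$ for $i,j\in\mathbb Z$. (One uses \eqref{d1}, \eqref{d2} to move the torus parts $k_\alpha,k_\beta$ through; as noted after \eqref{eq2}, the relation does not depend on them, so it is enough to treat $a=[A]$, $b=[B]$.) Rewriting $D([A_i],[B_j])$ via Lemma~4 into the $g_{B_j[1],A_i}^{\,N[1]\oplus L}$ form gives exactly the last two displayed relations of the Proposition, with $A=A_i$, $B=B_j$.

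Finally I would check that no further relations are needed, i.e. that the presentation is complete. This follows because every relation of $\tilde H_{\mathcal A}\otimes\tilde H_{\mathcal A}$ modulo the double-Hall ideal can be reduced, by the surjectivity of \eqref{dec2} and repeated use of Lemma~3 in the form above, to the listed relations among generators; so the natural surjection from the presented algebra onto $DH_{\mathcal A}$ has trivial kernel. Equivalently: the presented algebra maps onto $DH_{\mathcal A}$, and conversely $DH_{\mathcal A}$ satisfies all the listed relations (the first eight by definition, the last two by \eqref{eq3}--\eqref{eq4} applied to $A\in\mathcal A_i$, $B\in\mathcal A_j$), and a dimension/normal-form count — each side has the PBW-type basis indexed by pairs of objects of $\mathcal A$ — shows the map is an isomorphism.

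The main obstacle I expect is the careful bookkeeping in point (ii): verifying that when $[A]$ and $[B]$ are expanded in normal form, the scalar corrections $\langle A_j,A_i\rangle$ and the torus elements $k_{A_i-A_j}$ appearing in \eqref{d5} reassemble correctly, and that Lemma~3's iterated application is legitimate despite the sums in \eqref{d4}--\eqref{d5} being a priori infinite (here Proposition~4 guarantees finiteness, so Lemma~3 applies as in the Hopf-algebra case). One must also confirm that the coproduct of $[A_i]$ stays within the subalgebra generated by $\mathcal A_i$-objects and torus elements — true because subobjects of $A_i\in\mathcal A_i$ again lie in $\mathcal A_i$ — so that the inductive step of Lemma~3 only ever produces relations $D([A_i],[B_j])$ already on our list.
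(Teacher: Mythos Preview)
Your overall strategy matches the paper's: show that the relations restricted to $A\in\mathcal A_i$, $B\in\mathcal A_j$ already imply the full set of defining relations of $DH_{\mathcal A}$. Point (i) and the treatment of (5.2)--(5.9) via \eqref{dec2} are fine. The genuine gap is in point (ii): your assertion that ``subobjects of $A_i\in\mathcal A_i$ again lie in $\mathcal A_i$,'' and hence that $\Delta([A_i])$ lands in the subalgebra generated by $\mathcal A_i$-objects, is false in general. The Hom-vanishing in Section~5 only shows that an indecomposable summand of a subobject of $A\in\mathcal A_i$ lies in $\mathcal A_{i-1}\cup\mathcal A_i$, and of a quotient in $\mathcal A_i\cup\mathcal A_{i+1}$; moreover the coproduct \eqref{cop1} involves \emph{both} subobjects $L$ and quotients $A/L$. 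For the BGP reflection at a source $\alpha$, the indecomposable projective $P_\alpha$ lies in $\mathcal A_0$ but its top $P_\alpha/\mathrm{rad}\,P_\alpha\cong S_\alpha$ lies in $\mathcal A_1$, so the corresponding term of $\Delta([P_\alpha])$ already leaves the span of $\mathcal A_0$-objects. Consequently your iterated use of Lemma~1 does not terminate at relations $D([A_i],[B_j])$: each time you factor one side in normal form, Lemma~1 replaces the other side by components of its coproduct, which are again general objects, and you are back where you started.

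The paper closes this gap by an induction on $d_2(A,B)=\min(d(A),d(B))$ in the lexicographic order on $\mathbb Z^k$. When $d_2(A,B)$ is minimal one of $A,B$ is simple, hence lies in a single $\mathcal A_{i_0}$ and has only the trivial coproduct $[A]\otimes 1+k_A\otimes[A]$; writing the other object in normal form and applying Lemma~1 then reduces directly to the assumed relations $D([A],[B_j])$. In the inductive step one writes $[B]$ in normal form and applies Lemma~1: every \emph{nontrivial} component of the iterated coproduct of $[A]$ has strictly smaller $d$-value than $A$, so the inductive hypothesis applies to it even though it need not lie in any single $\mathcal A_i$, while the trivial components (one factor equal to $[A]$, the others $1$ or $k_A$) are handled either by the assumed relations (if $A\in\mathcal A_{i_0}$) or by a further normal-form expansion of $[A]$, which again strictly decreases $d$. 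Replacing your closure claim by this induction repairs the argument.
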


\begin{proof}
Our original definition of $DH_{\mathcal{A}}$ is clearly
isomorphic to the free associative algebra on
$\tilde{H}_{\mathcal{A}}\otimes \tilde{H}_{\mathcal{A}}$ modulo the
relations (5.2)-(5.11) for $A,B\in \mathcal{A}.$ Using \eqref{dec2},
it is also not hard to see that if the relations (5.2)-(5.9) hold
for all $A\in \mathcal{A}_i$ and $B\in \mathcal{A}_j,$ then they
hold for all $A, B\in \mathcal{A}.$

Assume that (5.10)-(5.11) holds for all $A\in \mathcal{A}_i$ and $B\in \mathcal{A}_j$ for all $i,j\in \mathbb{Z}.$  It is clear that we can use Lemma 1 to show that (5.10)-(5.11) also holds for all $A,B\in \mathcal{A}$ with either $A$ or $B$ simple. Recall the homomorphism $d_2:K_0(\mathcal{A})\times K_0(\mathcal{A})\rightarrow \mathbb{Z}^k$ and the order on $K^+_0(\mathcal{A})\times K^+_0(\mathcal{A})$ defined in the proof of Lemma 2. If $d_2(A,B)=(0,0,\cdots, 1),$ then either $A$ or $B$ is a simple object and the relation (5.10)-(5.11) holds. Therefore, using Lemma 1, we can show that (5.10)-(5.11) holds for all $A,B\in \mathcal{A}$ by induction.

\end{proof}

\section{Proof of Theorem 1}
We now prove the main result that the double Hall algebra is invariant under derived equivalences. Assume
throughout this section that $\mathcal{A}$ and $\mathcal{B}$ satisfy
the hypotheses of Theorem 1.

\begin{proposition}

Let $F:D^b(\mathcal{A})\rightarrow D^b(\mathcal{B})$ be a derived
equivalence. Then the assignment

\[1\otimes [M] \mapsto \langle N, N \rangle ^n (1\otimes
[N]k_N^n) \mbox{ for } n \mbox{ even}\]

\[1\otimes [M] \mapsto \langle N, N \rangle ^n
([N]k_N^n\otimes 1) \mbox{ for } n \mbox{ odd }\]

\[k_M\otimes 1 \mapsto k_N\otimes 1 \mbox{ for } n \mbox{ even}\]

\[k_M\otimes 1 \mapsto 1\otimes k_N \mbox{ for } n \mbox{ odd}\]

\[[M]\otimes 1 \mapsto \langle N, N \rangle ^n ([N]k_N^n \otimes 1) \mbox{ for } n \mbox{ even}\]

\[[M]\otimes 1 \mapsto \langle N, N \rangle ^n
(1\otimes [N]k_N^n) \mbox{ for } n \mbox{ odd }\]

\vspace{3mm}

where $M\in \mathcal{A}_n$ and $N= F(M)[-n],$ extends to a
homomorphism $F_*:DH_{\mathcal{A}}\rightarrow
DH_{\mathcal{B}}.$

\end{proposition}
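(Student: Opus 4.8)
The plan is to verify that the proposed assignment $F_*$ respects all of the defining relations of $DH_{\mathcal{A}}$, as listed in relations (5.2)--(5.11) of Proposition 5.2. Since $DH_{\mathcal{A}}$ is (by that proposition) the free associative algebra on $\tilde{H}_{\mathcal{A}}\otimes\tilde{H}_{\mathcal{A}}$ modulo those relations, restricted to generators coming from the subcategories $\mathcal{A}_i$, it suffices to check that the images under $F_*$ of the generators $[A]\otimes 1$, $1\otimes[A]$, $k_A\otimes 1$, $1\otimes k_A$ for $A\in\mathcal{A}_i$ satisfy the images of relations (5.2)--(5.11). The key structural input is that $F$ induces a bijection on isomorphism classes of indecomposables (Proposition 5.1) and hence an isomorphism $K_0(\mathcal{A})\xrightarrow{\sim}K_0(\mathcal{B})$ that sends $[M]\in K^+_0(\mathcal{A}_n)$ to $(-1)^n[N]$; moreover $F$ preserves the Euler form up to sign in the sense that $\langle M,M'\rangle_{\mathcal{A}}=\langle N,N'\rangle_{\mathcal{B}}$ when $M,M'$ lie in the same $\mathcal{A}_i$, and more generally the $\mathrm{Hom}$/$\mathrm{Ext}^1$ computations displayed before Proposition 5.2 pin down exactly which homological terms survive.

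First I would treat the relations (5.2)--(5.9), i.e. the ones not involving the double commutation relation. By the normal-form discussion preceding Proposition 5.2, it is enough to check them for $A\in\mathcal{A}_i$, $B\in\mathcal{A}_j$. The core computation is for the Hall product (5.2) and (5.5): I would distinguish the cases $i=j$, $i=j+1$ (resp. $i=j-1$), and $|i-j|\ge 2$. When $i=j$, both $[A]\otimes 1$ and $[B]\otimes 1$ map into the same tensor factor (factor 1 if $i$ even, factor 2 if $i$ odd), and $F$ restricts to an equivalence $\mathcal{A}_i\simeq\mathcal{B}_i$, so the Hall product relation is preserved up to the bookkeeping of the $\langle N,N\rangle^n$ and $k_N^n$ factors, which I would check cancels consistently. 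When $|i-j|\ge 2$ all relevant $\mathrm{Hom}$ and $\mathrm{Ext}^1$ groups vanish, so $[A]$ and $[B]$ commute up to an Euler-form scalar on both sides, and the $K_0$-grading makes the scalars match. The case $i=j\pm1$ is where $[A]$ and $[B]$ land in opposite tensor factors, so the left-hand side of (5.2) becomes a product of the form $([N]k_N^\bullet\otimes1)(1\otimes[N']k_{N'}^\bullet)$ — here one must invoke (5.8) (the relation $([A]\otimes1)(1\otimes[B])=[A]\otimes[B]$) together with the $k$-commutation (5.4)/(5.7), reassemble, and compare with the image of $\sum g_{A,B}^C([C]\otimes1)$; the point is that extensions of $B$ by $A$ in $\mathcal{A}$ with $A\in\mathcal{A}_i$, $B\in\mathcal{A}_{i+1}$ correspond, under $F$, to extensions in $D^b(\mathcal{B})$ which, after shifting, become honest short exact sequences or their "derived" counterparts captured by the $g_{B[1],A}^{N[1]\oplus L}$ notation. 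This last point is exactly the content that ties (5.2) in mixed degrees to the commutation relation (5.10)--(5.11), so it should not be checked in isolation.

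Then I would check the compatibility with (5.10)--(5.11), the double commutation relations, again reducing to $A\in\mathcal{A}_i$, $B\in\mathcal{A}_j$ by Proposition 5.2. The relations (5.10)--(5.11) are manifestly symmetric in a way that reflects the triangulated structure of $D^b(\mathcal{A})$: they involve $g_{B[1],A}^{N[1]\oplus L}$, which by Lemma 6 equals a sum over four-term exact sequences, i.e. morphisms $B\to A$ with prescribed kernel and cokernel. Since $F$ is an exact equivalence of triangulated categories, it induces bijections between the sets $\Delta_{B_\bullet,A_\bullet}^{C_\bullet}$ of exact triangles in $D^b(\mathcal{A})$ and the corresponding sets in $D^b(\mathcal{B})$, so the structure constants $g_{B[1],A}^{N[1]\oplus L}$ are literally preserved once we account for the degree shifts and the identification of objects. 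The remaining work is to verify that the Euler-form prefactors $\langle A,B\rangle\langle B,B\rangle/\langle L,N\rangle\langle N,N\rangle$ and the powers of $\langle N,N\rangle$ and $k_N$ introduced by $F_*$ conspire to reproduce the right-hand side. I would organize this by noting that for $A\in\mathcal{A}_i$, $B\in\mathcal{A}_j$ the only nonvanishing cases are $j\in\{i-1,i,i+1\}$ (from the $\mathrm{Hom}$/$\mathrm{Ext}$ vanishing), and handle the three subcases: $i\equiv j$ (same parity, same tensor factor — the relation becomes a commutation inside one copy of $\tilde H$, which I would trace back to a Hopf-algebra identity in $\tilde H_{\mathcal{B}}$, i.e. to $D(a,b)$ for $a,b$ in $\tilde H_{\mathcal{B}}$ or its coopposite), and the two mixed-parity cases, where the relation becomes a genuine cross-term identity between the two tensor factors and must be matched with the defining relation (5.10)--(5.11) in $DH_{\mathcal{B}}$ itself.

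The main obstacle I anticipate is the bookkeeping in the mixed-degree cases $j=i\pm1$: there the naive images of the generators lie in \emph{opposite} tensor factors, so what is a single Hall product in $\mathcal{A}$ becomes, after applying $F_*$, an expression that must be normal-ordered in $DH_{\mathcal{B}}$ using relations (5.8), (5.4)/(5.7), and crucially (5.10)--(5.11) of $DH_{\mathcal{B}}$ — and one has to confirm that the derived structure constants $g_{B[1],A}^{N[1]\oplus L}$ appearing after this reordering are exactly the ones produced by transporting the honest exact sequences in $\mathcal{A}$ through the equivalence $F$. Concretely, the identity $[A_i\oplus A_j]=\langle A_j,A_i\rangle[A_i]*[A_j]$ for $i<j$ (noted before Proposition 5.2) and its $F$-image have to be reconciled with the fact that in $\mathcal{B}$ the corresponding objects $B_i,B_j$ may have $\mathrm{Ext}^1$ between them, so that $[B_i]*[B_j]$ is \emph{not} simply $[B_i\oplus B_j]$; the discrepancy is precisely absorbed by the double commutation relation, and making this absorption explicit — i.e. showing that the defining relation (5.10)--(5.11) of $DH_{\mathcal{B}}$ is exactly what is needed — is the technical heart of the argument. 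I would isolate this as a lemma computing $([N_i]k_{N_i}^i\otimes1)(1\otimes[N_j]k_{N_j}^j)$ in $DH_{\mathcal{B}}$ for $N_i\in\mathcal{B}_i$, $N_j\in\mathcal{B}_{i+1}$ and matching it term-by-term with $F_*$ applied to $\langle A_j,A_i\rangle^{-1}[A_i\oplus A_j]=[A_i]*[A_j]$, and everything else should then follow by the reductions above together with the finiteness guaranteed by Lemma 2 and Proposition 4.3.
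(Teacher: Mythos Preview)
Your overall strategy is exactly the paper's: extend $F_*$ via the normal-form decomposition, then invoke Proposition~3 to reduce to checking relations (5.2)--(5.11) for $A\in\mathcal{A}_i$, $B\in\mathcal{A}_j$, and handle each relation by the cases $i=j$, $|i-j|=1$, $|i-j|\ge 2$. The identification of the $|i-j|=1$ case of (5.2)/(5.5) as the technical heart, requiring the commutation relation (5.10)--(5.11) in $DH_{\mathcal{B}}$ to reorder, is also correct and matches the paper's Case~2 computation.

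However, your organization of the (5.10)--(5.11) verification has the parity analysis backwards. Recall that $[A]\otimes 1$ lands in the first tensor factor for $i$ even and the second for $i$ odd, while $1\otimes[B]$ lands in the second factor for $j$ even and the first for $j$ odd. Hence when $i=j$ (same parity), the two generators go to \emph{opposite} factors, so the image of the cross relation (5.10)--(5.11) is again a genuine cross relation in $DH_{\mathcal{B}}$; the paper shows (Case~1) that after decomposing $L=L_0\oplus L_1$ and $N=N_{-1}\oplus N_0$ it becomes precisely (5.10)--(5.11) for $F(A),F(B)$, using that $F$ bijects the relevant exact triangles. Conversely when $|i-j|=1$ (opposite parity), both generators land in the \emph{same} tensor factor, and the cross relation collapses to an ordinary Hall product identity inside one copy of $\tilde H_{\mathcal{B}}$ (Case~2 in the paper). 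So your ``same parity $\Rightarrow$ inside one copy, mixed parity $\Rightarrow$ cross-term'' should be reversed. Also, the case $|i-j|\ge 2$ for (5.10)--(5.11) is not vacuous: the relation degenerates to $[A]\otimes[B]=(1\otimes[B])([A]\otimes 1)$, which still has to be checked against its image (Case~3). None of this breaks your plan---you would discover the reversal the moment you wrote down where each generator lands---but as stated the case analysis for (5.10)--(5.11) is mislabelled.
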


\begin{proof}
First, we can extend $F_*$ to a linear map
$F_*:\tilde{H}_{\mathcal{A}}\otimes
\tilde{H}_{\mathcal{A}}\rightarrow DH_{\mathcal{B}}.$ For
$M\in \mathcal{A}_n$ and $N=F(M)[-n],$ define
\[F_*(1\otimes k_M)=1 \otimes k_N\]
for $n$ even and
\[F_*(1\otimes k_M)=k_N\otimes 1\]
for $n$ odd. By \eqref{dec2}, we can decompose $k_{\alpha}[A]$
uniquely as
\[k_{\alpha}[A]=\prod_i k_{\alpha_i} \prod_{i<j} \langle A_j, A_i \rangle
\prod_i [A_i]\] for any $\alpha\in K_0(\mathcal{A})$ and $A\in
\mathcal{A}.$ So, define
\[F_*(k_{\alpha}[A]\otimes 1)= \prod_i F_*(k_{\alpha_i}\otimes 1) \prod_{i<j} \langle A_j, A_i \rangle
\prod_i F_*([A_i]\otimes 1).\] Define $F_*(1\otimes k_{\beta}[B])$
similarly, and define
\[F_*(k_{\alpha}[A]\otimes k_{\beta}[B])=F_*(k_{\alpha}[A]\otimes
1)F_*(1\otimes k_{\beta}[B]).\] Then $F_*$ extends to a linear map
on $\tilde{H}_{\mathcal{A}}\otimes \tilde{H}_{\mathcal{A}}$ and to
an algebra homomorphism on the free associative algebra on
$\tilde{H}_{\mathcal{A}}\otimes \tilde{H}_{\mathcal{A}}.$ By
Proposition 3, it suffices to show that $F_*$ preserves
(5.2)-(5.11).

It is immediate from our definition of $F_*$ that (5.8) and (5.9)
are preserved for $A\in \mathcal{A}_i$ and $B\in \mathcal{A}_j.$ It
is also easy to see that (5.3) and (5.6) are preserved for $A\in
\mathcal{A}_i$ and $B\in \mathcal{A}_j.$ This is because
$F$ induces a group homomorphism on $K_0(\mathcal{A}).$

Now, take $A\in \mathcal{A}_i$ and $A'\in \mathcal{A}_{i+n}.$
Without loss of generality, we may assume that $i$ is even. If we
set $B=F(A)[-i]$ and $B'=F(A')[-i-1],$ then for $n$ even,

\[F_*(k_A\otimes 1)F_*([A']\otimes 1) = F_*(k_A[A']\otimes 1) = \langle B, B \rangle^i k_B
[B']k_{B'}^i \otimes 1=\]
\[\langle  B, B \rangle^i (B|B')[B']k_B k_{B'}^i \otimes 1 = (B|B')F_*([A']\otimes 1)F_*(k_A\otimes
1),\] and for $n$ odd,

\[F_*(k_A\otimes 1)F_*([A']\otimes 1) = F_*(k_A[A']\otimes 1) = \langle B, B \rangle^i k_B
\otimes [B']k_{B'}^i=\]

\[\frac{\langle B, B \rangle^i}{(B|B')}(1\otimes
[B']k_B^{-1}k_{B'}^i)=\frac{1}{(B|B')}F_*([A']\otimes
1)F_*(k_A\otimes 1).\] Note that $(A|A')=(B|B')$ if $n=0$ and
$(A|A')=(B|B')^{-1}$ if $n=1.$ If $n\ge 2,$ then $(A|A')=(B|B')=1.$
Hence, $F_*$ is consistent with (5.4), and by a similar argument, it
is consistent with (5.7).

We divide the proof that $F_*$ preserves (5.5) into three cases. The
proof for (5.2) is analogous.

\underline{Case 1: $i=j$}

Let $A', A''\in \mathcal{A}_i$ and let $B'=F(A')[-i],
B''=F(A'')[-i].$ Without loss of generality, assume that $i$ is
even. Then

\begin{eqnarray} \nonumber \\
F_*(1\otimes [A'])F_*(1\otimes [A''])&=& \langle B', B' \rangle ^i
\langle B'', B'' \rangle ^i (1\otimes [B']k_{B'}^i)(1\otimes
[B'']k_{B''}^i) \nonumber \\
&=& \langle B', B' \rangle ^i \langle B'', B'' \rangle ^i (B'|
B'')^i (1\otimes [B'][B'']k_{B'+B''}) \nonumber \\
&=& \langle B'+B'', B'+B'' \rangle^i (1\otimes [B'][B'']k_{B'+B''}).
\nonumber \\ \nonumber
\end{eqnarray}
Since $F$ is a derived equivalence, it defines a bijection between
the short exact sequences
\[0\rightarrow A'' \rightarrow A \rightarrow A' \rightarrow 0\]
and
\[0\rightarrow B'' \rightarrow B \rightarrow B' \rightarrow 0,\]
and the last equation is equal to the image of
\[\frac{1}{\langle A'', A' \rangle} \sum_{[A]\in \mathcal{I}} g_{A',
A''}^A (1\otimes [A])\] under $F_*.$ This shows that $F_*$ is
consistent with (5.5).

\underline{Case 2: $|i-j|=1$}

Let $A_i\in \mathcal{A}_i$ and $A_{i+1}\in \mathcal{A}_{i+1}.$
Without loss of generality, we may assume that $i$ is even. It is
immediate that $F_*$ preserves (5.5) in the case that $A_i=A$ and
$A_{i+1}=B.$ So, substitute $A_{i+1}=A$ and $A_i=B$ into (5.5). Then
we can rewrite the right hand side of (5.5) in normal form:
\begin{eqnarray} \nonumber
\frac{1}{\langle A_i, A_{i+1} \rangle} \sum_{[C]} g_{A_{i+1}, A_i}^C
(1\otimes [C]) &=& \frac{1}{\langle A_i, A_{i+1} \rangle}
\sum_{[D]\in \mathcal{I}_i, [E]\in \mathcal{I}_{i+1}} g_{A_{i+1},
A_i}^{D\oplus E} (1\otimes [D\oplus E])\\ \nonumber &=&
\frac{1}{\langle A_i, A_{i+1} \rangle} \sum_{[D]\in \mathcal{I}_i,
[E]\in \mathcal{I}_{i+1}} \langle E, D
\rangle g_{A_{i+1}, A_i}^{D\oplus E} (1\otimes [D])(1\otimes [E]). \\
\nonumber
\end{eqnarray}
Here $\mathcal{I}_i$ and $\mathcal{I}_{i+1}$ denote the set of
isomorphism classes of objects in $A_i$ and $A_{i+1},$ respectively.
Now,

\begin{eqnarray} \nonumber
F_*(1\otimes [A_{i+1}])F_*(1\otimes [A_i]) &=& \langle B_{i+1},
B_{i+1} \rangle ^{i+1} \langle B_i, B_i \rangle ^i ([B_{i+1}]
k_{B_{i+1}}^{i+1}
\otimes [B_i] k_{B_i}^i) \nonumber \\
&=& \frac{\langle B_{i+1}, B_{i+1} \rangle ^{i+1} \langle B_i, B_i
\rangle ^i}{(B_{i+1}| B_i)^{i+1}}([B_{i+1}]\otimes
[B_i])(k_{B_{i+1}}^{i+1}\otimes k_{B_i}^i), \nonumber
\end{eqnarray}
where $B_{i+1}=F(A_{i+1})[-i-1]$ and $B_i=F(A_i)[-i].$ Using
\eqref{eq3}-\eqref{eq4}, we have

\begin{eqnarray} \nonumber
[B_{i+1}]\otimes [B_i] &=& \sum_{[L],[N]} \frac{\langle B_i,
B_{i+1}\rangle \langle B_{i+1}, B_{i+1} \rangle}{\langle N, L
\rangle \langle L, L \rangle} \frac{g_{B_{i+1}[1], B_i}^{L[1]\oplus
N}}{|\mbox{Ext}^1(N,L)|} (1\otimes k_{B_{i+1}-L}[N])([L]\otimes 1)
\\ \nonumber
&=& \sum_{[L],[N]} \frac{\langle B_i, B_{i+1}\rangle \langle
B_{i+1}, B_{i+1} \rangle}{\langle N, L \rangle \langle L, L \rangle}
\frac{g_{B_{i+1}[1], B_i}^{L[1]\oplus
N}}{|\mbox{Ext}^1(N,L)|}\frac{(B_{i+1}|N)(L|L)}{(L|N)(B_{i+1}|L)}
(1\otimes [N])([L]\otimes k_{B_{i+1}-L}). \\ \nonumber
\end{eqnarray}
Since $N\in \mathcal{B}_i$ and $L\in \mathcal{B}_{i+1}$ for all
non-zero terms, $|\mbox{Ext}^1(N,L)|=\langle N, L \rangle^{-2},$ and
after simplifying, we have

\[[B_{i+1}]\otimes [B_i] = \sum_{[L],[N]} \frac{\langle B_i, B_{i+1}
\rangle (B_{i+1}|B_i) \langle L, L \rangle}{\langle L, N \rangle
\langle B_{i+1}, B_{i+1} \rangle} g_{B_{i+1}[1], B_i}^{L[1]\oplus N}
(1\otimes [N])([L]\otimes k_{B_{i+1}-L}).\]

So, we calculate

\begin{eqnarray} \nonumber
F_*(1\otimes [A_{i+1}])F_*(1\otimes [A_i]) &=&
\\ \nonumber \frac{\langle B_{i+1}, B_{i+1} \rangle ^{i+1} \langle B_i, B_i \rangle
^i}{(B_{i+1}| B_i)^{i+1}} \sum_{[L],[N]} \frac{\langle B_i, B_{i+1}
\rangle (B_{i+1}|B_i) \langle L, L \rangle}{\langle L, N \rangle
\langle B_{i+1}, B_{i+1} \rangle} g_{B_{i+1}[1], B_i}^{L[1]\oplus N}
(1\otimes [N])([L]k_L^{i+1}\otimes k_N^i) &=& \\ \nonumber
\frac{\langle B_{i+1}, B_{i+1} \rangle ^i \langle B_i, B_i \rangle
^i}{(B_{i+1}| B_i)^i} \sum_{[L],[N]} \frac{\langle B_i, B_{i+1}
\rangle \langle L, L \rangle}{\langle L, N \rangle} g_{B_{i+1}[1],
B_i}^{L[1]\oplus N} (1\otimes [N])([L]k_L^{i+1}\otimes k_N^i) &=& \\
\nonumber \sum_{[L],[N]} \frac{\langle B_{i+1}, B_{i+1} \rangle ^i
\langle B_i, B_i \rangle ^i}{(B_{i+1}| B_i)^i} \frac{\langle B_i,
B_{i+1} \rangle \langle L, L \rangle}{\langle L, N \rangle}
g_{B_{i+1}[1], B_i}^{L[1]\oplus N} (1\otimes [N])([L]k_L^{i+1}\otimes k_N^i) &=& \\
\nonumber \sum_{[L],[N]} \langle L-N, L-N \rangle^i \frac{\langle
B_i, B_{i+1} \rangle \langle L, L \rangle}{\langle L, N \rangle}
g_{B_{i+1}[1], B_i}^{L[1]\oplus N} (1\otimes
[N])([L]k_L^{i+1}\otimes k_N^i) &=& \\ \nonumber \sum_{[L],[N]}
\frac{\langle L, L \rangle^{i+1} \langle N, N \rangle^i \langle B_i,
B_{i+1} \rangle}{\langle L, N \rangle (L|N)^i}g_{B_{i+1}[1],
B_i}^{L[1]\oplus N}(1\otimes [N])([L]k_L^{i+1}\otimes k_N^i) &=& \\
\nonumber \frac{1}{\langle A_i, A_{i+1} \rangle} \sum_{[L],[N]}
\frac{\langle L, L \rangle^{i+1} \langle N, N \rangle^i}{\langle L,
N \rangle}g_{B_{i+1}[i+1], B_i[i]}^{L[i+1]\oplus N[i]}(1\otimes
[N]k_N^i)([L]k_L^{i+1}\otimes 1) &=& \\ \nonumber \frac{1}{\langle
A_i, A_{i+1} \rangle} \sum_{[D]\in \mathcal{I}_i, [E]\in
\mathcal{I}_{i+1}} \langle E, D \rangle g_{A_{i+1}, A_i}^{D\oplus E}
F_*(1\otimes [D])F_*(1\otimes [E]) &=& \\ \nonumber \frac{1}{\langle
A_i, A_{i+1} \rangle} \sum_{[C]} g_{A_{i+1}, A_i}^C F_*(1\otimes
[C]),
\end{eqnarray}
showing that $F_*$ is consistent with (5.5).

\underline{Case 3: $|i-j|\ge 2$}

If $A_i\in \mathcal{A}_i$ and $A_{i+n}\in \mathcal{A}_{i+n}$ where
$n\ge 2,$ then it is clear from the discussion in Section 5 that the
right hand side of (5.5) is the same whether $A_i=A$ and $A_{i+n}=B$
or $A_{i+n}=A$ and $A_i=B.$ Explicitly,

\[\frac{1}{\langle A_{i+n}, A_i \rangle} \sum_{[C]} g_{A_{i},
A_{i+n}}^C (1\otimes [C])=\frac{1}{\langle A_i, A_{i+n} \rangle}
\sum_{[C]} g_{A_{i+n}, A_i}^C (1\otimes [C])=1\otimes [A_i\oplus
A_{i+n}]\] If $n$ is even, then it is also clear that
\[F_*(1\otimes [A_i])F_*(1\otimes [A_{i+n}])=F_*(1\otimes [A_{i+n}])F_*(1\otimes
[A_i])=F_*(1\otimes [A_i\oplus A_{i+n}]).\] If $n$ is odd, then we
have
\begin{eqnarray} \nonumber
F_*(1\otimes [A_i])F_*(1\otimes [A_{i+n}])&=&\langle B_i, B_i
\rangle^i \langle B_{i+n}, B_{i+n} \rangle^j ([B_i]k_{B_i}^i\otimes
1)(1\otimes [B_{i+n}]k_{B_{i+n}}^{i+n})\\ \nonumber &=& \langle B_i,
B_i \rangle^i \langle B_{i+n}, B_{i+n} \rangle^{i+n} ([B_i]\otimes
1)(1\otimes [B_{i+n}])(k_{B_i}^i\otimes k_{B_{i+n}}^{i+n}) \\
\nonumber
\end{eqnarray}
and
\begin{eqnarray} \nonumber
F_*(1\otimes [A_{i+n}])F_*(1\otimes [A_i])&=&\langle B_{i+n},
B_{i+n} \rangle^{i+n} \langle B_i, B_i \rangle^i (1\otimes
[B_{i+n}]k_{B_{i+n}}^{i+n})([B_i]k_{B_i}^i\otimes 1)\\ \nonumber &=&
\langle B_i, B_i \rangle^i \langle B_{i+n}, B_{i+n} \rangle^{i+n}
(1\otimes
[B_{i+n}])([B_i]\otimes 1)(k_{B_i}^i\otimes k_{B_{i+n}}^{i+n}), \\
\nonumber
\end{eqnarray}
where $B_i=F(A_i)[-i]$ and $B_{i+1}=F(A_{i+1})[-i-1].$

Now, because Hom$(B_{i}, B_{i+n})$=Hom$(B_{i+n}, B_i)$=0, from
\eqref{four}, we see that \eqref{eq1}-\eqref{eq2} reduces to
\[[B_{i+n}]\otimes [B_i]=(1\otimes [B_i])([B_{i+n}]\otimes 1)\]
when we take $A=B_{i+n}$ and $B=B_i.$

Hence, the equation
\[F_*(1\otimes [A_i])F_*(1\otimes [A_{i+n}])=F_*(1\otimes [A_{i+n}])F_*(1\otimes
[A_i])=F_*(1\otimes [A_i\oplus A_{i+n}])\] holds in general, and
$F_*$ preserves (5.5).

We now prove that $F_*$ preserves (5.10)-(5.11) by again considering
three separate cases.

\underline{Case 1: $i=j$}

Without loss of generality, take $A,B \in \mathcal{A}_0.$ We show
that the images of the left hand side and the right hand side of
\eqref{eq3}-\eqref{eq4} under $F_*$ are equal. If
$g_{B[1],A}^{N[1]\oplus L}\ne 0,$ then we can decompose $L$ and $N$
as
\[[L]=\langle L_{1}, L_0 \rangle [L_0] [L_1]\]
and
\[[N]=\langle N_0, N_{-1} \rangle [N_{-1}][N_0]\]
where $L_1\in \mathcal{A}_1, L_0, N_0 \in \mathcal{A}_0,$ and
$N_{-1}\in \mathcal{A}_{-1}.$ Similarly, if $g_{A[1]\oplus
B}^{L[1]\oplus N}\ne 0,$ then we can decompose $L$ And $N$ as
\[[L]=\langle L_0, L_{-1} \rangle [L_{-1}] [L_0]\]
and
\[[N]=\langle N_1, N_{0} \rangle [N_0][N_1],\]
where $L_{-1}\in \mathcal{A}_{-1}, L_0, N_0 \in \mathcal{A}_0,$ and
$N_1\in \mathcal{A}_1.$

Now, set $B_A=F(A)$ and $B_B=F(B).$  On the left hand side, set
$B_{L_0}=F(L_0), B_{L_1}=F(L_1)[-1], B_{N_0}=F(N_0),
B_{N_{-1}}=F(N_{-1})[1],$ and on the right hand side, set $B_A=F(A),
B_{L_{-1}}=F(L_{-1})[1], B_{L_0}=F(L_0), B_{N_0}=F(N_0),
B_{N_1}=F(N_1)[-1].$ Then it remains to show that

\begin{equation} \nonumber
\sum_{[L],[N]} \frac{\langle A, B \rangle \langle B, B
\rangle}{\langle L, N \rangle \langle N, N \rangle}
\frac{g_{B[1],A}^{N[1]\oplus L}}{|\mbox{Ext}^1(L,N)|} \frac{\langle
N, N_{-1} \rangle \langle N_0, L_1 \rangle}{\langle L, L_1 \rangle
\langle L_0, N_{-1} \rangle}
k_{B_A-B_{L_0}-B_{N_{-1}}}([B_{L_0}\oplus B_{N_{-1}}]\otimes
[B_{N_0}\oplus B_{L_1}])=
\end{equation}

\begin{equation} \nonumber
\sum_{[L],[N]} \frac{\langle B, A \rangle \langle A, A
\rangle}{\langle N, L \rangle \langle L, L \rangle} \frac{g_{A[1],
B}^{L[1]\oplus N}}{|\mbox{Ext}^1(N,L)|}\frac{\langle L, L_{-1}
\rangle \langle L_0, N_1 \rangle}{\langle N, N_1 \rangle \langle
N_0, L_{-1} \rangle} (1\otimes k_{B_A-B_{L_0}-B_{N_1}})(1\otimes
[B_{N_0}\oplus B_{L_{-1}}])([B_{L_0}\oplus B_{N_1}]\otimes 1).
\end{equation}
But this is just the identity \eqref{eq3}-\eqref{eq4} with $F(A)$
and $F(B)$ in place of $A$ and $B:$ Because $F$ establishes a
bijection between exact triangles

\[A \rightarrow N[1]\oplus L \rightarrow B[1]
\rightarrow A[1]\]

\[\mbox{(resp. } B \rightarrow L[1]\oplus N \rightarrow A[1]
\rightarrow B[1] \mbox{)}\] and

\[B_A \rightarrow (B_{N_0}\oplus B_{L_1})[1]\oplus (B_{L_0}\oplus B_{N_{-1}}) \rightarrow B_B[1]
\rightarrow B_A[1]\]

\[\mbox{(resp. } B_B \rightarrow (B_{L_0}\oplus B_{N_1})[1]\oplus (B_{N_0}\oplus B_{L_{-1}}) \rightarrow B_A[1]
\rightarrow B_B[1] \mbox{),}\] we have
\[g_{B[1],
A}^{N[1]\oplus L}=g_{B_B[1], B_A}^{(B_{N_0}\oplus B_{L_1})[1]\oplus
(B_{L_0}\oplus B_{N_{-1}})}\]\ and
\[g_{A[1], B}^{L[1]\oplus
N}=g_{B_A[1], B_B}^{(B_{L_0}\oplus B_{N_1})[1]\oplus (B_{N_0}\oplus
B_{L_{-1}})}.\] Moreover, this says that it suffices to show that

\[\frac{\langle A, B \rangle \langle B, B \rangle}{\langle
L, N \rangle \langle N, N \rangle} \frac{g_{B[1],A}^{N[1]\oplus
L}}{|\mbox{Ext}^1(L,N)|} \frac{\langle N, N_{-1} \rangle \langle
N_0, L_1 \rangle}{\langle L, L_1 \rangle \langle L_0, N_{-1}
\rangle}=\]

\[\frac{\langle B_A, B_B \rangle \langle B_B, B_B \rangle g_{B_B[1], B_A}^{(B_{N_0}\oplus B_{L_1})[1]\oplus
(B_{L_0}\oplus B_{N_{-1}})}}{\langle B_{L_0}\oplus B_{N_{-1}},
B_{N_0}\oplus B_{L_1} \rangle \langle B_{N_0}\oplus B_{L_1},
B_{N_0}\oplus B_{L_1}\rangle |\mbox{Ext}^1(B_{L_0}\oplus B_{N_{-1}},
B_{N_0}\oplus B_{L_1})|}\] and

\[\frac{\langle B, A \rangle \langle A, A \rangle}{\langle
N, L \rangle \langle L, L \rangle} \frac{g_{A[1],N}^{L[1]\oplus
N}}{|\mbox{Ext}^1(N,L)|} \frac{\langle L, L_{-1} \rangle \langle
L_0, N_1 \rangle}{\langle N, N_1 \rangle \langle N_0, L_{-1}
\rangle}=\]

\[\frac{\langle B_B, B_A \rangle \langle B_A, B_A \rangle g_{B_A[1], B_B}^{(B_{L_0}\oplus B_{N_1})[1]\oplus
(B_{N_0}\oplus B_{L_{-1}})}}{\langle B_{N_0}\oplus B_{L_{-1}},
B_{L_0}\oplus B_{N_1} \rangle \langle B_{L_0}\oplus B_{N_1},
B_{L_0}\oplus B_{N_1} \rangle |\mbox{Ext}^1(B_{N_0}\oplus
B_{L_{-1}}, B_{L_0}\oplus B_{N_1})|}.\] for arbitrary $A,B,L,N\in
\mathcal{A},$ which can be seen by a straightforward computation.

\underline{Case 2: $|i-j|=1$}

Take $A_i\in \mathcal{A}_i, A_{i+1}\in \mathcal{A}_{i+1}$. Without
loss of generality we may assume that $i$ is even. Then for
$A=A_{i+1}$ and $B=A_i,$ (5.10)-(5.11) becomes

\begin{eqnarray} \nonumber
(1\otimes [A_i])([A_{i+1}]\otimes 1)&=& \sum_{[L], [N]}
\frac{\langle A_{i+1}, A_i \rangle \langle A_i, A_i \rangle}{\langle
L, N \rangle \langle N, N \rangle} \frac{g_{A_i[1],
A_{i+1}}^{N[1]\oplus L}}{|\mbox{Ext}^1(L,N)|}
k_{A_{i+1}-L}[L]\otimes [N] \\ \nonumber &=& \sum_{[L], [N]}
\frac{\langle A_{i+1}, A_i \rangle \langle A_i, A_i \rangle}{\langle
L, N \rangle \langle N, N \rangle} g_{A_i[1], A_{i+1}}^{N[1]\oplus
L} \langle L, N \rangle^2 k_{A_{i+1}-L}[L]\otimes [N] \\ \nonumber
&=& \sum_{[L], [N]} \frac{\langle A_{i+1}, A_i \rangle \langle A_i,
A_i \rangle}{\langle N, N \rangle} g_{A_i[1], A_{i+1}}^{N[1]\oplus
L} \langle L, N \rangle \frac{(A_{i+1}|L)(L|N)}{(L|L)(A_{i+1}|N)}
[L]\otimes [N]k_{A_{i+1}-L}^{-1} \\ \nonumber &=& \sum_{[L], [N]}
\frac{\langle A_{i+1}, A_i \rangle \langle A_i, A_i \rangle}{\langle
N, N \rangle} g_{A_i[1], A_{i+1}}^{N[1]\oplus L} \langle L, N
\rangle \frac{(A_{i+1}|A_{i+1})(L|A_i)}{(L|A_{i+1})(A_{i+1}|A_i)}
[L]\otimes [N]k_{A_{i+1}-L}^{-1}. \\ \nonumber
\end{eqnarray}

Let $B_L=F(L)[-i-1], B_N=F(N)[-i], B_i=F(A_i)[-i],$ and
$B_{i+1}=F(A_{i+1})[-i-1].$ Then applying $F_*$ to the right hand
side gives

\begin{eqnarray} \nonumber
&&\sum_{[L], [N]} \frac{\langle A_{i+1}, A_i \rangle \langle A_i,
A_i \rangle}{\langle N, N \rangle} g_{A_i[1], A_{i+1}}^{N[1]\oplus
L} \langle L, N \rangle
\frac{(A_{i+1}|A_{i+1})(L|A_i)}{(L|A_{i+1})(A_{i+1}|A_i)}
\frac{\langle L,L \rangle^{i+1} \langle N,N \rangle^i
}{(L|N)^{i+1}}(1\otimes
[B_L][B_N]k_{B_i}^ik_{B_{i+1}}^{i+1})\nonumber \\
&=& \sum_{[L],[N]} \frac{\langle A_i, A_i \rangle^i \langle A_{i+1},
A_{i+1} \rangle^{i+1} \langle A_{i+1}, A_i \rangle}{(A_i| A_{i+1})^i
\langle N, L \rangle}g_{A_i[1], A_{i+1}}^{N[1]\oplus L} (1\otimes
[B_L][B_N]k_{B_i}^ik_{B_{i+1}}^{i+1}) \nonumber \\
&=& \sum_{[L],[N]} \frac{\langle A_i, A_i \rangle^i \langle A_{i+1},
A_{i+1} \rangle^{i+1} \langle A_{i+1}, A_i \rangle}{(A_i|
A_{i+1})^i}g_{A_i[1], A_{i+1}}^{N[1]\oplus L} (1\otimes [B_L \oplus
B_N]k_{B_i}^ik_{B_{i+1}}^{i+1}) \nonumber \\
&=& \sum_{[B]} \frac{\langle B_i, B_i \rangle^i \langle B_{i+1},
B_{i+1} \rangle^{i+1} (B_i| B_{i+1})^i}{\langle B_{i+1}, B_i
\rangle} g_{B_i, B_{i+1}}^{B} (1\otimes [B]k_{B_i}^ik_{B_{i+1}}^{i+1}) \nonumber \\
&=& \langle B_i, B_i \rangle^i \langle B_{i+1}, B_{i+1}
\rangle^{i+1}(1\otimes
[B_i]k_{B_i}^i[B_{i+1}]k_{B_{i+1}}^{i+1}) \nonumber \\
&=& F_*(1\otimes [A_i])F_*([A_{i+1}]\otimes 1) \nonumber \\
&=& F_*[(1\otimes [A_i])([A_{i+1}]\otimes 1)], \nonumber
\end{eqnarray}
proving that (5.10) and (5.11) have the same image under $F_*.$ The
proof for $A=A_i$ and $B=A_{i+1}$ is similar.

\underline{Case 3: $|i-j|\ge 2$}

If we take $A\in \mathcal{A}_i$ and $B\in \mathcal{A}_j$ for
$|i-j|\ge 2,$ then (5.10)-(5.11) becomes
\[[A]\otimes [B]=(1\otimes
[B])([A]\otimes 1),\] and it is easy to see that
\[F_*([A]\otimes [B])=F_*(1\otimes
[B])F_*([A]\otimes 1).\] Hence, (5.10)-(5.11) is preserved.

\end{proof}

It is clear from the definition of $F_*$ that if
$G:D^b(\mathcal{B})\rightarrow D^b(\mathcal{A})$ is a quasi-inverse
to the derived equivalence $F:D^b(\mathcal{A})\rightarrow
D^b(\mathcal{B}),$ then $G_*$ defines an inverse to $F_*,$ proving
Theorem 1.

\section{Bernstein-Gelfand-Ponomarev reflection functors}

For a quiver $\vec{Q},$ a vertex $\alpha$ is called a source (resp.,
sink) if no arrows end in $\alpha$ (resp., start at $\alpha$). For a
given vertex $\alpha,$ we denote by $\sigma_{\alpha}\vec{Q}$ the
quiver obtained from $\vec{Q}$ by reversing all arrows at $\alpha.$
Now, assume that $\vec{Q}$ contains no multiple edges or oriented
cycles. In this case $\vec{Q}$ must contain at least one source and
sink. The derived Bernstein-Gelfand-Ponomarev reflection functor
(\cite{GM})
\[R_{\alpha}^+:D^b(Rep_k(\vec{Q}))\rightarrow
D^b(Rep_k(\sigma_{\alpha}\vec{Q}))\] is defined for a given source
$\alpha$ as follows: given a complex $C_{\bullet}\in
D^b(Rep_k{\vec{Q}}),$ we define $A_{\bullet}$ to be the subcomplex
of $C_{\bullet}$ corresponding to the $\alpha$ component, i.e.
$(A_i)_{\alpha}=(C_i)_{\alpha}$ and $(A_i)_{\beta}=0$ if $\beta\ne
\alpha.$ Define $B_{\bullet}$ by $(B_i)_{\beta}=(C_i)_{\beta}$ for
$\beta\ne \alpha$ and $(B_i)_{\alpha}=\oplus_{\beta\in S}
(C_i)_{\beta}$ where $S$ is the set of all vertices in $\vec{Q}$
connected to $\alpha$ by an arrow. In addition, we replace each
arrow starting at $\alpha$ with the inclusion map going in the
opposite direction. With the differentials defined in the natural
way, $A_{\bullet}$ and $B_{\bullet}$ become objects in
$D^b(Rep_k(\sigma_{\alpha}\vec{Q})).$ There is a natural map
$f:A_{\bullet}\rightarrow B_{\bullet}$ obtained by taking the direct
sum of the maps $f_{\beta}:(C_i)_{\alpha}\rightarrow (C_i)_{\beta}$
over $S$ in each degree. We define $R_{\alpha}^+(C_{\bullet})$ to be
the mapping cone $C(f)=A_{\bullet}[1]\oplus B_{\bullet}$ of
$f:A_{\bullet}\rightarrow B_{\bullet}.$ It is not hard to check that
this definition is functorial. There is a dual construction
$R_{\alpha}^-$ for reversing the arrows at a sink, and in fact this
defines a quasi-inverse to the derived equivalence $R_{\alpha}^+.$

The category of representations of a quiver is one of the most basic examples of a hereditary Abelian category. Over a finite field, it is easy to check that it is finitary. Applying Theorem 1 to the derived equivalences $R_{\alpha}^{\pm}$
for $k=\mathbb{F}_q$ provides isomorphisms between the double Hall
algebras of quivers with the same underlying graph but different
orientations (as shown in the original paper \cite{BGP}, any
orientation on a graph without cycles can be obtained from
any other orientation by a sequence of reflections at sources and
sinks). These formulas were given in \cite{SVdB} and \cite{XY} using
the original (non-derived) Bernstein-Gelfand-Ponomarev reflection
functors (\cite{BGP}) with $\vec{Q}$ an A-D-E quiver. In this case,
the double Hall algebra of $Rep_{\mathbb{F}_q}(\vec{Q})$ is
canonically isomorphic to $U_q(\mathfrak{g})$ and these formulas
give automorphisms of $U_q(\mathfrak{g})$ which turn out to coincide
with Lusztig's symmetries (\cite{L}, \cite{XY}). Moreover, it is known that the
Grothendieck groups $K_0(D^b(Rep_{\mathbb{F}_q}(\vec{Q}))$ and
$K_0(D^b(Rep_{\mathbb{F}_q}(\sigma_{\alpha}\vec{Q}))$ can be
identified with the root lattice of $U_q(\mathfrak{g})$, and
under this correspondence the induced map
\[(R_{\alpha}^+)_*:K_0(D^b(Rep_{\mathbb{F}_q}(\vec{Q})))\rightarrow
K_0(D^b(Rep_{\mathbb{F}_q}(\sigma_{\alpha}\vec{Q})))\] is exactly
the reflection at the simple root associated to $\alpha.$

\section{The Kronecker quiver and $Coh(\mathbb{P}^1)$}
Let $K$ denote the Kronecker quiver, i.e.the quiver with two
vertices and two arrows oriented in the same direction. The
underlying graph of $K$ is the affine Dynkin diagram $A_1^{(1)},$ so
$K$ is tame and the dimension vectors of its indecomposable
representations correspond to the positive roots of
$\hat{\mathfrak{sl}_2}.$ In dimension $(n,n+1)$ ($n\in
\mathbb{Z}_+$) there is a unique indecomposable object up to
isomorphism, which we denote by $P(n).$ It is given by two maps $
\left(
\begin{array}{ccc}
I_n   \\
0   \end{array} \right)$ and $ \left( \begin{array}{ccc}
0   \\
I_n   \end{array} \right)$ from $k^n$ to $k^{n+1}.$ Similarly, in
dimension $(n+1,n)$ ($n\in \mathbb{Z}_+$) there is a unique
indecomposable object up to isomorphism given by the maps $ \left(
\begin{array}{ccc} I_n & 0
\end{array} \right)$ and $ \left( \begin{array}{ccc} 0 & I_n
\end{array} \right)$ from $k^{n+1}$ to $k^n$ which we denote by $I(n).$ These two classes of indecomposable
objects form the preprojective and preinjective components of
$Rep_k(K),$ respectively. The regular indecomposable objects lie in
dimensions $(n,n)$ corresponding to the imaginary roots and for each $n$ they are
parameterized by $x\in \mathbb{P}^1(k).$

Now consider $Coh(\mathbb{P}^1(k)),$ the category of coherent
sheaves on $\mathbb{P}^1(k).$ For this category the indecomposable
objects can be parameterized by the nonstandard set of positive
roots
\[\{(n,n+1)|n\in \mathbb{Z}\} \cup \{(n,n)|n\in \mathbb{N}\}\]
of $\hat{\mathfrak{sl_2}}.$ The correspondence is given by defining
\[\underline{dim}(\mathcal{F})=(deg(\mathcal{F}), rank(\mathcal{F})+deg(\mathcal{F}))\] The line bundles
$\mathcal{O}(n)$ then correspond to the real roots $(n,n+1)$ and
corresponding to each imaginary root $(n,n)$ is the family of
indecomposable sheaves $\mathcal{O}_x^n$ parameterized by $x\in
\mathbb{P}^1(k).$

The conditions of Definition 1 apply to $Coh(\mathbb{P}^1(k))$ when $k$ is a finite field due to the following lemma.

\begin{lemma}
Let $X$ be a smooth projective curve over a finite field $k.$ Then $Coh(X)$ is finitary.
\end{lemma}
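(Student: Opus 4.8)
The plan is to verify the two requirements of Definition 1 for $Coh(X)$, where $X$ is a smooth projective curve over the finite field $k$; since a smooth connected scheme is integral, we take $X$ connected (so in particular integral, with a well-defined generic rank function). Essential smallness of $Coh(X)$ is standard. For the finiteness of $\mathrm{Hom}$ and $\mathrm{Ext}^1$, I would note that for $\mathcal{F},\mathcal{G}\in Coh(X)$ the groups $\mathrm{Ext}^i_{Coh(X)}(\mathcal{F},\mathcal{G})$ are $k$-vector spaces, computed by the local-to-global spectral sequence $H^p(X,\mathcal{E}xt^q(\mathcal{F},\mathcal{G}))\Rightarrow\mathrm{Ext}^{p+q}(\mathcal{F},\mathcal{G})$; the sheaves $\mathcal{E}xt^q(\mathcal{F},\mathcal{G})$ are coherent and vanish for $q\geq 2$ (as $X$ is regular of dimension $1$), while $H^p(X,-)$ of a coherent sheaf vanishes for $p\geq 2$ and is finite-dimensional over $k$ for $p\in\{0,1\}$ by Serre's finiteness theorem for coherent cohomology on a projective scheme over a field. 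Hence $\mathrm{Hom}(\mathcal{F},\mathcal{G})$ and $\mathrm{Ext}^1(\mathcal{F},\mathcal{G})$ are finite-dimensional over the finite field $k$, so finite sets. (The same computation gives $\mathrm{Ext}^{\geq 2}=0$, i.e. $Coh(X)$ is hereditary.)

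It then remains to produce the homomorphism $d$. I would take the target to be $\mathbb{Z}^2$ and set
\[d([\mathcal{F}])=\bigl(\mathrm{rank}(\mathcal{F}),\ \deg(\mathcal{F})\bigr),\qquad \deg(\mathcal{F}):=\chi(\mathcal{F})-\mathrm{rank}(\mathcal{F})\cdot\chi(\mathcal{O}_X),\]
where $\chi(\mathcal{F})=\sum_i(-1)^i\dim_k H^i(X,\mathcal{F})$ is finite by the previous paragraph and $\mathrm{rank}(\mathcal{F})$ is the generic rank. Both $\chi$ and $\mathrm{rank}$ are additive on short exact sequences, hence so is $\deg$, so $d$ descends to a group homomorphism $K_0(Coh(X))\to\mathbb{Z}^2$.

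To verify conditions (i) and (ii) I use the structure theory of coherent sheaves on the smooth curve $X$: every $\mathcal{F}$ contains a torsion subsheaf $\mathcal{T}$ with $\mathcal{F}/\mathcal{T}$ torsion-free, hence locally free, and $\mathrm{rank}(\mathcal{F})\geq 0$. If $\mathrm{rank}(\mathcal{F})>0$ then $d([\mathcal{F}])\in(\mathbb{N}\setminus\{0\})\times\mathbb{Z}$; if $\mathrm{rank}(\mathcal{F})=0$ then $\mathcal{F}=\mathcal{T}$ is supported on finitely many closed points, so $H^1(X,\mathcal{F})=0$ and $\deg(\mathcal{F})=\dim_k H^0(X,\mathcal{F})\geq 0$, whence $d([\mathcal{F}])\in\mathbb{N}^2$. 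In either case $d(K_0^+(Coh(X)))\subset\mathbb{N}^2\cup(\mathbb{N}\setminus\{0\})\times\mathbb{Z}$, which is (i). For (ii), if $d([\mathcal{F}])=0$ then $\mathrm{rank}(\mathcal{F})=0$, so $\mathcal{F}$ is torsion with $\dim_k H^0(X,\mathcal{F})=\deg(\mathcal{F})=0$; a nonzero torsion sheaf on $X$ has nonzero global sections, so $\mathcal{F}=0$.

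The argument is essentially a verification; the one point that needs care is (i). It is important that the coordinate $\mathrm{rank}$, which is always non-negative, precedes the coordinate $\deg$, which takes negative values (e.g. on $\mathcal{O}_X(-n)$, $n>0$); the shape of the set in Definition 1(i) tolerates a single signed coordinate only when it is placed last and the preceding coordinates are non-negative and nonzero, and this is exactly guaranteed by the fact that $\mathrm{rank}(\mathcal{F})=0$ forces $\mathcal{F}$ torsion and hence $\deg(\mathcal{F})\geq 0$. By contrast the finiteness conditions are routine given the finiteness of coherent cohomology over the finite field $k$ and the fact that $\dim X = 1$.
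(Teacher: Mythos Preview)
Your proof is correct and follows the same approach as the paper: invoke Serre's finiteness theorem to get finite $\mathrm{Hom}$ and $\mathrm{Ext}^1$, then take $d=(\mathrm{rank},\deg):K_0(Coh(X))\to\mathbb{Z}^2$. The paper's proof merely asserts that this $d$ satisfies Definition~1 without verification, whereas you actually check conditions (i) and (ii) carefully; in particular your observation that $\mathrm{rank}$ must precede $\deg$ (since only rank-zero sheaves are guaranteed nonnegative degree) is a point the paper leaves implicit.
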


\begin{proof}
It is a theorem of Serre that $\mbox{Hom}(\mathcal{F},\mathcal{G})$ and $\mbox{Ext}(\mathcal{F},\mathcal{G})$ are finite-dimensional vector spaces over $k$ for all $\mathcal{F},\mathcal{G}\in Coh(X)$ (\cite{Har}). The map $d:K_0(Coh(X))\rightarrow \mathbb{Z}^2$ defined by $d([\mathcal{F}])=(rank(\mathcal{F}), deg(\mathcal{F}))$ satisfies the desired properties of Definition 1.

\end{proof}

In addition to the indecomposable objects of $Coh(\mathbb{P}^1(k))$
and $Rep_k(K)$ both being parameterized by positive roots of
$\hat{\mathfrak{sl_2}},$ the Hall algebras of these two categories (which we denote by $H_{\mathbb{P}^1}$ and
$H_K,$ respectively) are related to the quantum enveloping algebras
of the Borel subalgebras of $\hat{\mathfrak{sl_2}}$ associated to
their corresponding set of positive roots. Specifically, the
subalgebra of $H_K$ generated by the elements $[P(0)],$ $[I(0)],$
$k_{P(0)}^{\pm 1},$ and $k_{I(0)}^{\pm 1},$ which we denote by
$C_K,$ is isomorphic to $U_q(\mathfrak{b}_+),$ the Borel subalgebra
corresponding to the affine root system of $\hat{\mathfrak{sl_2}}.$
The subalgebra of $H_{\mathbb{P}^1}$ generated by the elements
$[\mathcal{O}(n)],$ $T_r,$ $k_{\mathcal{O}},$ and
$k_{\mathcal{O}_x},$ which we denote by $C_{\mathbb{P}^1},$ is
isomorphic to $U_q(\mathfrak{b}_+^{\mathcal{L}})$ in the Drinfeld
realization \cite{D1} of $U_q(\hat{\mathfrak{sl_2}}).$ Here
$\mathfrak{b}_+^{\mathcal{L}}=\mathfrak{n}^{\mathfrak{sl_2}}_+[t,t^{-1}]\oplus
\delta^{\mathfrak{sl_2}}[t]$ and $T_r$ denotes a specific linear
combination of elements $[\mathcal{O}_x]$ over $x\in
\mathbb{P}^1(k)$ depending on the integer $r\in \mathbb{Z}_+$ (see
\cite{BaK} for details).

A result of \cite{Be} is the derived
equivalence $D^b(Coh(\mathbb{P}^1(k)))\cong D^b(Rep_k (K)).$ This is
given explicitly by the tilting functor

\begin{equation} \la{rhom}
RHom(\mathcal{O}\oplus \mathcal{O}(1),\cdot):
D^b(Coh(\mathbb{P}^1(k)))\rightarrow D^b(Rep_k (K))
\end{equation}
which sends the line bundles $\mathcal{O}(n)$ to the preprojective
indecomposables $P(n)$ for $n\ge 0,$ and to the preinjective
indecomposables $I(-n-1)$ for $n<0$ (translated to the right by one
degree), and sends the torsion sheaves $\mathcal{O}_x^n$ to the
regular indecomposables in dimension $(n,n).$ By Theorem 1, this equivalence induces an isomorphism between the double Hall algebras $DH_{\mathbb{P}^1}$ and $DH_{K}.$ Let $DC_{\mathbb{P}^1}$ denote the subalgebra of $DH_{\mathbb{P}^1}$ generated by the elements $a\otimes 1$ and $1\otimes b$ for $a\in C_{\mathbb{P}^1}$ and $b\in C^{coop}_{\mathbb{P}^1}.$ Let $DC_{K}$ denote the subalgebra of $DH_{K}$ generated by the elements $a\otimes 1$ and $1\otimes b$ for $a\in C_{K}$ and $b\in C^{coop}_{K}.$  As shown in \cite{BS}, the isomorphism between $DH_{\mathbb{P}^1}$ and $DH_{K}$ restricts to an isomorphism between $DC_{\mathbb{P}^1}$ and $DC_{K},$ which can be identified with the Drinfeld-Beck isomorphism $U_q(\mathcal{L}\mathfrak{sl_2})\rightarrow U_q(\hat{\mathfrak{sl_2}}).$ 

\section{Affine quivers and weighted projective lines}
As a generalization of $Coh(\mathbb{P}^1),$ we can consider the category of coherent sheaves on a weighted projective line $\mathbb{X}_{p,\underline{\lambda}},$ first studied in \cite{GL}. Given a sequence of postive integers $\bold{p}=(p_1, \cdots, p_n),$ we define the weighted projective space $\mathbb{P}_{\bold{p}}(k)$ as the quotient of $\mathbb{A}_n-\{0\}$ by the action of the group $G(\bold{p})=\{(t_1,\cdots t_n)\in (k^*)^n | t_1^{p_1}=\cdots = t_n^{p_n}\}$. For a sequence of distinct elements $\underline{\lambda}=(\lambda_1,\cdots, \lambda_{n-1})\in (\mathbb{P}^1(k))^n,$ the equations $X_i^{p_i}=X_2^{p_2}-\lambda_i X_1^{p_1},$ for $i=3,\cdots, n-1$ determine a two-dimensional subvariety $F(\bold{p}, \underline{\lambda})$ of $\mathbb{A}_n.$ We define the weighted projective line $\mathbb{X}_{p, \underline{\lambda}}$ as the quotient of $F(\bold{p}, \underline{\lambda})$ by $G(\bold{p}).$ To define a sheaf theory for $\mathbb{X}_{p, \underline{\lambda}},$ we first let $L(\bold{p})$ denote the rank one abelian group on generators $\vec{x_1}, \cdots, \vec{x_n}$ with relations $p_1\vec{x_1}=\cdots =p_n\vec{x_n}.$ Then the coordinate ring $S(\bold{p},\underline{\lambda})=k[x_1,\cdots, x_n]/I_{\lambda}$ of the affine variety $F(\bold{p}, \underline{\lambda})$ can be given an $L(\bold{p})$-grading by setting $deg(x_i)=\vec{x_i}.$ The category  $Coh(\mathbb{X}_{p, \underline{\lambda}})$ of coherent sheaves on $\mathbb{X}_{p, \underline{\lambda}}$ is defined as the Serre quotient of the category of finitely generated graded $S(\bold{p},\underline{\lambda})$-modules by the Serre subcategory of finite dimensional modules.

The ``virtual genus" of $\mathbb{X}_{p,\underline{\lambda}}$ is defined to be \[g_{\mathbb{X}}=1+\frac{1}{2}\left((n-2)p-\sum_{i=1}^n \frac{p}{p_i}\right)\]
where $p$ is the least common multiple of $p_1,\cdots, p_n.$ It satisfies an analogue of the Riemann-Roch theorem. We associate a graph $\mathbb{T}_{p_1,\cdots, p_n}$ to the sequence $(p_1, \cdots, p_n)$ as follows. Starting with a central vertex, we attach to it $n$ segments, the $i$-th segment consisting of $p_i-1$ edges.

\begin{example}
For $p_1,p_2\ge 2,$ the graph $\mathbb{T}_{p_1,p_2}$ is $A_{p_1+p_2-1}.$

\end{example}

\begin{example}
For $p\ge 2,$ the graph $\mathbb{T}_{p,2,2}$ is $D_{p+2}.$

\end{example}

It can be checked that $\mathbb{T}_{p_1,\cdots, p_n}$ is an A-D-E Dynkin diagram if and only if $g_{\mathbb{X}}<1,$ and if $g_{\mathbb{X}}=1,$ then it is an affine A-D-E diagram. In general, we can associate a Kac-Moody algebra $\mathfrak{g}$ to $\mathbb{T}_{p_1,\cdots, p_n}.$ The Grothendieck group $K_0(Coh(\mathbb{X}_{p,\underline{\lambda}}))$ is isomorphic to the root lattice of the loop algebra $\mathcal{L}\mathfrak{g}.$ As in the case of quivers and $Coh(\mathbb{P}^1),$ the class of an indecomposable object in $K_0(Coh(\mathbb{X}_{p,\underline{\lambda}}))$ always corresponds to a positive root (see \cite{CB}). Note that $Coh(\mathbb{P}^1)$ may be seen as a weighted projective line with $\bold{p}=(1)$ and $\mathbb{T}_1=A_1.$

\begin{proposition}
The category $Coh(\mathbb{X}_{p,\underline{\lambda}})$ is hereditary and abelian, and if $k$ is finite it is finitary. 
\end{proposition}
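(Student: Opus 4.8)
The plan is to establish the three claims—abelian, hereditary, finitary—in sequence, since each relies on structural facts about $Coh(\mathbb{X}_{p,\underline{\lambda}})$ as a Serre quotient of graded modules. First I would recall that $Coh(\mathbb{X}_{p,\underline{\lambda}})$ is a Serre quotient of the category $\mathrm{mod}^{L(\mathbf{p})}\text{-}S(\mathbf{p},\underline{\lambda})$ of finitely generated $L(\mathbf{p})$-graded modules by the Serre subcategory of finite-dimensional ones; since $S(\mathbf{p},\underline{\lambda})$ is Noetherian the module category is abelian, and a Serre quotient of an abelian category by a Serre subcategory is again abelian, so the abelian claim is immediate. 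For the hereditary claim I would invoke the result of Geigle-Lenzing (\cite{GL}): $Coh(\mathbb{X}_{p,\underline{\lambda}})$ has a tilting object $T$ whose endomorphism ring is the canonical algebra (or, when $g_{\mathbb{X}}\le 1$, a hereditary algebra), but more directly, Geigle-Lenzing prove that $Coh(\mathbb{X}_{p,\underline{\lambda}})$ has global dimension one. Concretely, one computes $\mathrm{Ext}^i$ via the graded-module description: $\mathrm{Ext}^i_{Coh}(\mathcal{F},\mathcal{G})$ can be computed from a graded free resolution of the module representing $\mathcal{F}$, and since the coordinate ring $S(\mathbf{p},\underline{\lambda})$ of the affine surface $F(\mathbf{p},\underline{\lambda})$ minus the origin has the property that the punctured spectrum has homological dimension governed by that of a smooth curve, all higher Ext groups beyond degree one vanish. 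I would simply cite \cite{GL} for the statement $\mathrm{gl.dim}\,Coh(\mathbb{X}_{p,\underline{\lambda}})=1$.

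For the finitary claim I would proceed exactly as in the proof of Lemma 8 (the $Coh(X)$ case). First, finiteness of $\mathrm{Hom}$ and $\mathrm{Ext}^1$ sets: these are finite-dimensional $k$-vector spaces because $Coh(\mathbb{X}_{p,\underline{\lambda}})$ is a Hom-finite, Ext-finite category over $k$—this again follows from the graded-module description together with the fact that $\mathbb{X}_{p,\underline{\lambda}}$ is a "projective" object in the appropriate sense (the analogue of Serre finiteness holds by Geigle-Lenzing, \cite{GL}, who establish a Riemann-Roch formula and coherence). When $k=\mathbb{F}_q$ is finite, finite-dimensionality over $k$ forces the sets themselves to be finite. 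Second, I would exhibit the homomorphism $d:K_0(Coh(\mathbb{X}_{p,\underline{\lambda}}))\rightarrow\mathbb{Z}^2$ given by $d([\mathcal{F}])=(\mathrm{rank}(\mathcal{F}),\deg(\mathcal{F}))$, where rank and a suitably normalized degree are defined via the Riemann-Roch formalism of \cite{GL}. I must check conditions (i) and (ii) of Definition 1: every nonzero coherent sheaf has rank $\ge 0$, a sheaf of rank $0$ is torsion hence has $\deg > 0$ if nonzero, and a rank-$0$, degree-$0$ sheaf is zero—this gives (i) with $k=2$ (so the condition reads $d(K_0^+)\subset \mathbb{N}^2\cup(\mathbb{N}^1\setminus\{0\})\times\mathbb{Z}$, i.e. either rank $>0$ with arbitrary degree, or rank $=0$ with degree $>0$). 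Condition (ii) is then exactly the statement that rank and degree both vanish only for the zero sheaf.

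The main obstacle is pinning down the correct normalization of the degree function so that it lands in $\mathbb{Z}$ rather than $\mathbb{Q}$ (the naive Euler-characteristic-based degree on a weighted projective line is $\mathbb{Q}$-valued, with denominators dividing $p=\mathrm{lcm}(p_i)$); I would clear denominators by using $p\cdot\deg$, or equivalently work with the "normalized" rank and degree from \cite{GL} for which the values lie in $\mathbb{Z}$, and verify that torsion sheaves still get strictly positive normalized degree—this is where one uses that the length of a nonzero torsion sheaf, weighted by the local multiplicities, is a positive integer. Everything else is a direct transcription of Lemma 8 with the Riemann-Roch data of Geigle-Lenzing in place of Serre duality on a smooth curve.
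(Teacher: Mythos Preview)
Your approach is essentially the same as the paper's: cite \cite{GL} for the abelian and hereditary properties and for Hom-finiteness, and use $d([\mathcal{F}])=(\mathrm{rank}(\mathcal{F}),\deg(\mathcal{F}))$ for the map in Definition~1. The only minor differences are that the paper obtains $\mathrm{Ext}^1$-finiteness from Hom-finiteness via Serre duality rather than from a general Ext-finiteness statement, and that the paper does not discuss the degree normalization issue you raise---your treatment there is in fact more careful than the paper's.
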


\begin{proof}
It is shown that $Coh(\mathbb{X}_{p,\underline{\lambda}})$ is hereditary and abelian in \cite{GL}. It is also shown that $Hom(\mathcal{F}, \mathcal{G})$ is finite for all $\mathcal{F},\mathcal{G}\in Coh(\mathbb{X}_{p,\underline{\lambda}}),$ and by Serre duality this holds for $Ext^1(\mathcal{F}, \mathcal{G})$ as well. Finally, to define $d:K_0(Coh(\mathbb{X}_{p,\underline{\lambda}}))\rightarrow \mathbb{Z}^2,$ we again use the map $d([\mathcal{F}])=(rank(\mathcal{F}), deg(\mathcal{F})).$

\end{proof}

We can therefore define the double Hall algebra of $Coh(\mathbb{X}_{p,\underline{\lambda}}),$ which we denote by $DH_{p,\underline{\lambda}}.$ As shown by Schiffmann \cite{Sc2}, in the case of $g_{\mathbb{X}}=0$ or $1,$ the Hall algebra $H_{p,\underline{\lambda}}$ of $Coh(\mathbb{X}_{p,\underline{\lambda}})$ contains a subalgebra isomorphic to $U_q(\mathfrak{b}_+^{\mathcal{L}})$ in the Drinfeld realization of $U_q(\hat{\mathfrak{g}})$. Here $\mathfrak{g}$ is the Kac-Moody algebra corresponding to $\mathbb{T}_{p_1\, \cdots, p_n}.$ The equivalence \eqref{rhom} can be generalized by replacing $\mathcal{O}\oplus \mathcal{O}(1)$ with $\bigoplus_{0\le \vec{x} \le \vec{c}} \mathcal{O}(\vec{x})$ where $\vec{c}=p_1\vec{x_1}=\cdots=p_n\vec{x_n}$ and $\mathcal{O}(\vec{x})$ is the image of the $S(\bold{p},\underline{\lambda})$-module $S(\bold{p},\underline{\lambda})[\vec{x}]$ in $Coh(\mathbb{X}_{p,\underline{\lambda}})$. In this case, $Rep_k(K)$ is replaced with the category of finite-dimensional modules over Ringel's ``canonical algebra" $\Lambda_{p, \underline{\lambda}}$ associated to $\bold{p}$ and $\underline{\lambda}$ (see \cite{R4} for details). The algebras $\Lambda_{p, \underline{\lambda}}$ are not hereditary in general, and this equivalence was used to give a classification of indecomposable modules over tubular algebras (i.e., those corresponding to $g_{\mathbb{X}}=1$). In the case that $g_{\mathbb{X}}<1,$ there is another derived equivalence between $\Lambda_{p, \underline{\lambda}}$-mod and the category of representations of the affine quiver corresponding to $\mathbb{T}_{p_1, \cdots, p_n}.$ Applying Theorem 1 to the composition 
\[D^b(Coh(\mathbb{X}_{p,\underline{\lambda}}))\rightarrow D^b(Rep_{\mathbb{F}_q}(\hat{Q}))\]
defines an isomorphism between the double Hall algebra $DH_{p, \lambda}$ of $Coh(\mathbb{X}_{p,\underline{\lambda}})$ and the double Hall algebra $DH_{\hat{Q}}$ of $Rep_{\mathbb{F}_q}(\hat{Q}).$ It seems plausible that the restriction of this isomorphism to the subalgebras corresponding to $U_q(\mathfrak{b}_+^{\mathcal{L}})$ and $U_q(\mathfrak{b}_+)$ can again be identified with the Drinfeld-Beck isomorphism $U_q(\mathcal{L}\mathfrak{g})\rightarrow U_q(\hat{\mathfrak{g}}).$

\bibliographystyle{amsalpha}

\end{document}